\documentclass[a4paper,11pt,reqno]{amsart}
\usepackage{amssymb,mathrsfs}
\usepackage{amsthm}
\usepackage{amsmath}
\usepackage{latexsym}
\usepackage{fancyhdr}
\usepackage{ascmac}
\usepackage{color}
\usepackage[all]{xy}
\usepackage{amscd}
\usepackage{listings}
\usepackage{xcolor}
\usepackage{caption}

\theoremstyle{plain}
\newtheorem{thm}{Theorem}[section]
\newtheorem{prop}[thm]{Proposition}

\newtheorem{lem}[thm]{Lemma}

\makeatletter

\@addtoreset{equation}{section}
\makeatother

\newcommand{\bQ}{\overline{\mathbb{Q}}}

\newcommand{\bZ}{\overline{\mathbb{Z}}}

\newcommand{\bF}{\overline{\mathbb{F}}}

\newcommand{\C}{\mathbb{C}}
\newcommand{\R}{\mathbb{R}}
\newcommand{\Q}{\mathbb{Q}}

\newcommand{\Z}{\mathbb{Z}}

\newcommand{\F}{\mathbb{F}}

\newcommand{\lra}{\longrightarrow}

\newcommand{\HH}{\mathbb{H}}
\newcommand{\vp}{\varphi}

\renewcommand{\O}{\mathcal{O}}

\newcommand{\Aut}{{\rm Aut}}
\newcommand{\Bir}{{\rm Bir}}

\newcommand{\ds}{\displaystyle}

\newcommand{\G}{\Gamma}

\newcommand{\lcm}{{\rm lcm}}
\newcommand{\la}{\lambda}

\newcommand{\SL}{{\rm SL}}

\newcommand{\bs}{\backslash}

\newcommand{\ord}{{\rm ord}}

\newcommand{\wW}{\widetilde{W}}

\newcommand{\ve}{\varepsilon}
\newcommand{\PP}{\mathbb{P}}

\title[An Explicit Belyi Map for the Wiman Sextic and...]
{An Explicit Belyi Map for the Wiman Sextic and Cusp Forms for a Noncongruence Subgroup}
\author{Madoka Horie and Takuya Yamauchi}
\keywords{the Wiman curve, Belyi maps, noncongruence modular forms, the unbounded denominator conjecture for weight 2 cusp forms.}
\thanks{}
\subjclass[2020]{14G10, 11G35}

\address{Madoka Horie \\
Faculty of Fundamental Science, 
National Institute of Technology, 
Niihama College., 102-8554, JAPAN}
\email{horiemaaa@gmail.com}

\address{Takuya Yamauchi \\ 
Mathematical Inst. Tohoku Univ.\\
 6-3,Aoba, Aramaki, Aoba-Ku, Sendai 980-8578, JAPAN}
\email{takuya.yamauchi.c3@tohoku.ac.jp}

\begin{document}
\maketitle

\begin{abstract}
In this paper, we explicitly determine an algebraic Belyi function on a unique smooth projective model $\wW$ of the Wiman sextic curve $W$ and describe its complex uniformization in terms of modular functions associated with a certain noncongruence subgroup $\G_{\wW} \subset \SL_2(\Z)$. As an application, we give a direct proof of the unbounded denominators conjecture in weight \(2\) for $\G_{\wW}$. The conjecture is now known in full generality by the work of Calegari, Dimitrov, and Tang, following earlier progress including work of Dong, Lin, and Ng. Our proof, however, uses a degeneration of $\wW$ over $\F_{5}$ together with explicit Puiseux series expansions and is substantially different from the methods employed in their work.
\end{abstract}

\tableofcontents

\section{Introduction}\label{intro}

Belyi's fundamental theorem \cite{Belyi} characterizes algebraic curves over
number fields in terms of their ramification. More precisely, a complex
projective smooth curve $C$ admits a model over a number field if and only if
there is a non-constant morphism
\[
  \beta:C\lra \mathbb{P}^1
\]
whose branch locus is contained in $\{0,1,\infty\}$. We shall refer to such a
morphism as an algebraic Belyi function. When both $C$ and $\beta$ are defined
over a number field $K$, we say that $\beta$ is an algebraic Belyi function
defined over $K$.

Consequently, every projective smooth geometrically connected curve over a
number field possesses an algebraic Belyi function, possibly after replacing
the ground field by a finite extension. The theorem, however, is essentially
an existence result: the problem of constructing such a function explicitly
for a given curve is generally non-trivial. A substantial body of work has
therefore been devoted to the explicit determination of algebraic Belyi
functions; see, for example, \cite{SS} and \cite{SV}.

From the viewpoint of hyperbolic uniformization, a Belyi function may be
realized as a natural quotient map. There are two relevant realizations,
according to whether the uniformizing group is non-cocompact or cocompact. 
Let $\mathbb{H}:=\{\tau\in\C\mid {\rm Im}(\tau)>0\}$. 
\begin{enumerate}
\item[(1)] \textit{Non-compact uniformization.}
There exist a subgroup $\G\subset  \SL_2(\Z)$ of finite index and 
a biholomorphism 
\[
  X(\G):=\G\bs
  \bigl(\mathbb{H}\cup\mathbb{P}^1(\Q)\bigr)
  \stackrel{\sim}{\lra} C(\C)
 \]
such that 
$\beta$ is interpreted as the natural projection $$X(\G)\lra X(\SL_2(\Z))\stackrel{\sim}{\lra} \mathbb{P}^1(\C)
,\ \G\tau\mapsto \SL_2(\Z)\tau\mapsto \frac{1}{1728}j(\tau)$$  
where $j$ is Klein's $j$-invariant function, with expansion $j(\tau)=q^{-1}+744+196884q+\cdots,\ 
q:=e^{2\pi \sqrt{-1}\tau},\ \tau\in\mathbb{H}$. 

We remark that if $\G\subset \G(2)$, then 
$X(\G)\lra X(2)\stackrel{\la\atop \sim}{\lra}\PP^1(\C)$ is also a Belyi function 
where $\la$ is the modular lambda function (see (\ref{lambda}) or \cite[VII, Section 7]{Chan}). 

\item[(2)] \textit{Compact uniformization.}
There exist a triangle group
$\Delta(a,b,c)\subset\SL_2(\R)$, a subgroup
$\Delta\subset\Delta(a,b,c)$ of finite index, and a biholomorphism
\[
  X_\Delta:=\Delta\bs\mathbb{H}
  \stackrel{\sim}{\lra}C(\C)
\]
such that $\beta$ corresponds to the quotient map
\[
  X_\Delta\lra X_{\Delta(a,b,c)}
  \stackrel{\sim}{\lra}\mathbb{P}^1(\C),
  \quad
  \Delta\tau\longmapsto\Delta(a,b,c)\tau
  \longmapsto j_{\Delta(a,b,c)}(\tau),
\]
where $j_{\Delta(a,b,c)}$ is a uniformizing function on
$X_{\Delta(a,b,c)}$. 
\end{enumerate}
For these two uniformization statements, we refer respectively to
\cite[p.~71, Theorem]{Se97} and
\cite[Section~3.3, Theorem~3]{Wolfart}.

In this paper, we first compute an algebraic Belyi function on the Wiman curve 
over $\Q$ defined by
\begin{equation}\label{Bring}
W:
f(X,Y,Z):=X^6+Y^6+Z^6+(X^2+Y^2+Z^2)(X^4+Y^4+Z^4)-12 X^2 Y^2 Z^2=0 
\end{equation}
inside $\mathbb{P}^2$ with homogeneous coordinates $[X:Y:Z]$. 
This curve has four ordinary double points $[\pm 1:\pm 1:1]$ as its singularities. 
By the Max Noether's formula (\cite[p.614, Theorem 5]{BK}), the geometric genus $g(W)$ of $W$ 
can be calculated as 
$$g(W)=\frac{(6-1)(6-2)}{2}-4\times \frac{2(2-1)}{2}=6.$$
The full birational automorphism group $\Bir(W)$ is isomorphic to the symmetric group $S_5$ of five letters. 
Explicitly, it is generated by $[X:Y:Z]\mapsto [\ve_1 X:\ve_2 Y:\ve_3 Z]$ 
($(\ve_1,\ve_2,\ve_3)\in \mu_2^3/\Delta \mu_2$), the permutations of coordinates 
which makes up $\mu_2^3/\Delta \mu_2\rtimes S_3\simeq S_4$, and plus 
the automorphism $\alpha:[X:Y:Z]\mapsto [g_0:g_1:g_2]$ of order five where 
\[
\begin{array}{l}
g_0=-X^2 + X Y - Y^2 + X Z - Y Z + Z^2,\\
g_1= -X^2 + X Y + Y^2 + X Z - Y Z - Z^2,\\
 g_2=X^2 + X Y - Y^2 + X Z - Y Z - Z^2.
 \end{array}
 \]
More precisely, $[X:Y:Z]\mapsto [-X:Y:Z]$,  
$[X:Y:Z]\mapsto [X:-Y:Z]$, and $\alpha$  corresponds to $(12)(34),\ (13)(24)$ and 
$(14325)$ respectively. 
We refer to \cite[Section 3]{GKLM} for the birational automorphism group of $W$. 
Let $\wW$ be a unique smooth model of $W$ over $\Q$ which is obtained by 
blowing up four ordinary double points (see Proposition \ref{diff-can}(2) for 
another projective model). 
Obviously, $\Aut_\Q(\wW)=\Aut_{\bQ}(\wW)\simeq \Bir_{\bQ}(W)=\Bir_{\Q}(W)\simeq S_5$ and $g(\wW)=g(W)=6$. 

\begin{thm}\label{main1}{\rm(}Theorem \ref{mainclaim} {\rm)} The rational function 
$$\beta(X,Y,Z)=\frac{3125 \left(\frac{3 \left(X^2 Y^2+Y^2 Z^2+Z^2 X^2\right)}{\left(X^2+Y^2+Z^2\right)^2}-1\right)^2 \left(\frac{X^2 Y^2+Y^2 Z^2+Z^2 X^2}{\left(X^2+Y^2+Z^2\right)^2}+5\right)^3}{\left(\frac{5 \left(X^2 Y^2+Y^2 Z^2+Z^2 X^2\right)}{\left(X^2+Y^2+Z^2\right)^2}-2\right) \left(-\frac{75 \left(X^2 Y^2+Y^2 Z^2+Z^2 X^2\right)^2}{\left(X^2+Y^2+Z^2\right)^4}+\frac{1010 \left(X^2 Y^2+Y^2 Z^2+Z^2 X^2\right)}{\left(X^2+Y^2+Z^2\right)^2}+13\right)^2}$$
on $\wW$ 
is a generator of $\Q(\wW)^{S_5}=\Q(W)^{S_5}$ and it gives rise to an algebraic Belyi map over $\Q$:
$$\beta:\wW\lra S_5\bs \wW\simeq \PP^1,\ [X:Y:Z]\mapsto \beta(X,Y,Z).$$
\end{thm}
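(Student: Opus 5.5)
The plan has three steps: show that $\beta$ is $S_5$-invariant, show that it has degree $120$, and use Riemann--Hurwitz to confirm that it is branched over at most three points.

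\textbf{Invariance.} Put $s_1=X^2+Y^2+Z^2$, $s_2=X^2Y^2+Y^2Z^2+Z^2X^2$ and $t=s_2/s_1^2$. The function $t$ is visibly invariant under the sign changes and the coordinate permutations, so it lies in $\Q(W)^{S_4}$. The curve $W$ itself is written in terms of symmetric functions of $X^2,Y^2,Z^2$: with $s_3=X^2Y^2Z^2$, the equation $f=0$ expresses $s_3/s_1^3$ as a polynomial in $t$. Hence $W/S_4$ is rational, with coordinate $t$.

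To confirm that $\Q(W)^{S_4}=\Q(t)$, I will compute the degree of $t$ on $\wW$ and check that it is $24$. The map $[X:Y:Z]\mapsto[s_1:s_2:s_3]$ factors through the weighted quotient $\PP^2/(\mu_2^3\rtimes S_3)$. The fibre of $t$ over a generic value $c$ is $W\cap\{s_2=cs_1^2\}$, which has $6\cdot4=24$ points. Degree $24$ means $W\to\PP^1_t$ is exactly the quotient by $S_4$, because $S_4$ acts faithfully.

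The function $\beta$ is a rational function of $t$ of degree $5$:
$$\beta=R(t)=\frac{3125\,(3t-1)^2(t+5)^3}{(5t-2)(-75t^2+1010t+13)^2}.$$
Since $[S_5:S_4]=5$, the extension $\Q(W)^{S_5}\subset\Q(t)$ has degree $5$. So it suffices to show that $R(t)$ is $\alpha$-invariant. Then $\Q(R(t))\subset\Q(W)^{S_5}\subset\Q(t)$ with both of the outer degrees over $\Q(R(t))$ equal to $5$, which forces equality.

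\textbf{The main obstacle: $\alpha$-invariance.} I will substitute $[g_0:g_1:g_2]$ into $t$ and verify that $R(t\circ\alpha)-R(t)$ vanishes modulo $f$. This is a polynomial identity in the ideal generated by $f$; I would check it by Gröbner reduction or by resultants.

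A more conceptual route avoids heavy algebra. The five functions $t,\,t\circ\alpha,\dots,t\circ\alpha^4$ are the conjugates of $t$ over $\Q(W)^{S_5}$, because the cosets of $S_4$ are represented by powers of $\alpha$. So they are the roots of a quintic $P(T)-\beta\,Q(T)=0$. It is enough to check that $t\circ\alpha$ satisfies $R(T)=R(t)$ on $W$. Concretely, I will take the resultant of $f$ with the relation, or verify the identity at enough points of $W$ over a number field to exceed the degree bound.

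\textbf{Belyi property.} Here I use the ramification of the $S_5$-quotient. On $\wW\to\PP^1$ the points with nontrivial stabilizer form orbits with stabilizers of orders dividing elements of $S_5$. By Riemann--Hurwitz,
$$10=2g-2=120(-2)+\sum\bigl(120-\#\text{orbit}\bigr).$$
For a $(2,4,5)$ signature, $120\bigl(1-\tfrac12\bigr)+120\bigl(1-\tfrac14\bigr)+120\bigl(1-\tfrac15\bigr)=60+90+96=246$, and indeed $246-240=6\neq10$. So I will instead recompute with a $(2,4,6)$ signature: $60+90+100=250$, giving $250-240=10$, which works.

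So $\wW/S_5\cong\PP^1$ has exactly three branch points, with ramification indices $2,4,6$. I then locate them through $R$. The numerator of $R$ has multiplicities $(2,3)$, the denominator $(1,2,2)$, and $R-1$ has a factorization to be computed. Combining these with the ramification of $t$ on $W\to W/S_4$ (multiplicities $2,4,6$ appear via the composition) identifies the branch values of $\beta$ as $0$, $1$ and $\infty$. The numerator of $\beta-1$ should factor as a square times a linear factor, or similar. Checking this factorization, together with the fact that $R$ is unramified elsewhere (a degree-$5$ map has $2\cdot5-2=8$ ramification points to account for), completes the proof.
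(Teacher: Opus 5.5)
Your route differs from the paper's. The paper never substitutes $\alpha$. It takes the inertia data (cyclic of orders $6,4,2$ over $0,1,\infty$) as input and uses double cosets to show that $Y=H\bs\wW$ has genus $0$ and that $Y\to G\bs\wW$ has ramification $[3,2],[4,1],[2,2,1]$. It then solves for the unique degree-$5$ Belyi map $\vp$ with these data, and fixes the M\"obius change $t\mapsto T$ by locating three special points: fixed points of a transposition at $t=-5$, of a $3$-cycle at $t=\frac13$, and the point $t=\infty$. Invariance of $\beta$ under $S_5$ is then automatic, because $\beta$ is the quotient map by construction.

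You instead certify the given formula after the fact:
\begin{itemize}
\item A degree count gives $\Q(t)=\Q(\wW)^{S_4}$. This justifies something the paper only asserts, but you should add that the base locus $s_1=s_2=0$ of the pencil misses $W$. It does, since $f=2s_1^3-5s_1s_2-9s_3$.
\item A machine check of $R(t\circ\alpha)\equiv R(t)$ modulo $f$ gives $S_5$-invariance.
\item Riemann--Hurwitz replaces the external inertia data.
\end{itemize}
The paper's route explains where $\beta$ comes from and needs only a small polynomial system. It does depend on the external data and on matching the special points correctly. Its text even says $t=\infty$ lies over $\la=\infty$, whereas $T=-\frac{1}{80}$ is the unramified point of $\vp$ over $\la=1$, and indeed $\beta=1$ at $t=\infty$; the final formula is unaffected. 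Your route is self-contained and insensitive to such bookkeeping. However, all of its content sits in a large identity you have not carried out, and it could not have found $\beta$ in the first place.

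The Belyi step has a gap as written. Checking that $(2,4,6)$ satisfies Riemann--Hurwitz does not show that it is the signature. The quotient has genus $0$ (since $\PP^1_t$ dominates $\wW/S_5$), and stabilizers are cyclic of orders in $\{2,3,4,5,6\}$. Under these constraints, $\sum_i(1-1/e_i)=\frac{25}{12}$ has exactly two solutions, $(2,4,6)$ and $(3,3,4)$, so you must exclude the second. One way: $[1:1:\sqrt{-3}]$ is fixed by the transposition $[X:Y:Z]\mapsto[Y:X:Z]$, and no cyclic subgroup of $S_5$ of order $3$ or $4$ contains a transposition. Alternatively, by parity, two $3$-cycles and a $4$-cycle cannot have product $1$.

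Your closing step is also not yet an argument. Knowing that $R$ is unramified away from $0,1,\infty$ says nothing about the branch values of $t\colon\wW\to\PP^1_t$, which are $t=-5,\frac13,\frac25,\infty$. A clean finish is as follows. The branch set of $\beta=R\circ t$ has exactly three points and contains every critical value of $R$. Moreover $R$ is ramified over $0$ (at $t=-5,\frac13$), over $\infty$ (at the roots of $-75t^2+1010t+13$), and over $1$, because $R-1=3(25t-19)^4/\bigl((5t-2)(-75t^2+1010t+13)^2\bigr)$. Hence the branch set is exactly $\{0,1,\infty\}$.
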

We identify $\Aut_\Q(\wW)=\Aut_{\bQ}(\wW)=\Aut(\beta)\simeq {\rm Gal}(\C(\wW)/\beta^\ast(\C(\PP^1)))$ with $S_5$. 
For each $x\in \{0,1,\infty\}\subset \PP^1$ and $y\in \beta^{-1}(x)$, we denote by $I_x(y)=
\{g\in S_5\ |\ gy=y\}$ the inertia group at $y$.  
By \cite[Lemma 2.9]{FL} (see also \cite{EKS} for terminology), 
$$I_0(y)\sim \langle \sigma_0:=(132)(45)\rangle,\  
I_1(y)\sim \langle \sigma_1:=(1542)\rangle,\ 
I_\infty(y)\sim \langle \sigma_\infty:=(14)(23)\rangle$$
where $\sim$ means a conjugacy relation in $S_5$. We remark that \cite[Lemma 2.9]{FL} uses a right action, whereas we use a left action. Consequently, their elements $\sigma_0,\sigma_1,\sigma_\infty$ are the inverses of ours. 
Using a uniformization 
$$\G(2)\bs \mathbb{H}\stackrel{\sim}{\lra} \PP^1\setminus\{0,1,\infty\},\ \tau\mapsto 
\lambda(\tau)=\frac{\theta_2(\tau)^4}{\theta_3(\tau)^4},$$
where $\theta_2(\tau)=\ds\sum_{n\in \Z}q_2^{(n+\frac{1}{2})^2},\ 
\theta_3(\tau)=\ds\sum_{n\in \Z}q_2^{n^2},\ q_2:=e^{\pi i \tau }$ 
 (see \cite[VII, Section 7]{Chan}), 
we can define a finite index subgroup $\G_{\wW} \subset \G(2)\subset \SL_2(\Z)$ making the 
following diagram commutative:
 \[
\xymatrix{
\G(2)\bs\mathbb{H} \ar[r]^{\lambda \hspace{5mm} \atop \sim \hspace{5mm}} & \PP^1\setminus\{0,1,\infty\} \\
\G_{\wW}\bs\mathbb{H}\ar[u]^{\text{the natural} \atop \text{projection}}  
\ar[r]^{\sim \hspace{12mm}} & \wW\setminus \beta^{-1}(\{0,1,\infty\}) \ar[u]_\beta.
}
\]
Explicitly, if we fix generators $T^2,\ ST^2 S^{-1}$ of $\G(2)$ with 
$
T=\begin{pmatrix}
1 & 1 \\
0 & 1
\end{pmatrix}
$
and 
$
S=\begin{pmatrix}
0 & 1 \\
-1 & 0
\end{pmatrix}
$, 
then $\G_{\wW}$ is given  as the kernel of the surjective homomorphism 
$$\rho:\G(2)\lra S_5,\ 
\begin{array}{c}
T^2\mapsto \sigma_0 \\
ST^2S^{-1}\mapsto \sigma_1 
\end{array}.
 $$
Thus, $\G_{\wW}$ is of index $[\SL_2(\Z):\G(2)]\times |S_5|=720$. As we will see later,  
$\G_{\wW}$ is a non-congruence subgroup and the width of the cusp $\infty$ of $X_{\wW}:=X(\G_{\wW})$  is 12 (see Lemma \ref{noncong}). Let $q_{12}=e^{2\pi i \tau/12}$ be 
a local coordinate of $X_{\wW}$ at $\infty$. 
By using an elementary method which is quite different from the methods in previous results 
\cite{CDT}, \cite{DLN} (see also \cite{Fresan}), we prove the unbounded denominator 
conjecture for $\G_{\wW}$ for weight 2 cusp forms:
\begin{thm}\label{main2} $($Theorem \ref{udc}$)$ Any cusp form in $S_2(\G_{\wW})$ all of whose Fourier coefficients of its $q_{12}$-expansion belong to $\bQ$ has unbounded denominators. 
In particular, the unbounded denominator conjecture for weight 2 cusp forms is true for $\G_{\wW}$. 
\end{thm}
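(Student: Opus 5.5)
Since $\wW$ has genus $6$, $S_2(\G_{\wW})$ is a $6$-dimensional space identified with the space $H^0(\wW,\Omega^1)$ of holomorphic differentials on $\wW$. My plan is to reduce the claim to a statement about the $q_{12}$-expansions of these differentials near a point above $\infty$.

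\textbf{Step 1: write $f\,d\tau$ algebraically.} Under the uniformization, a cusp form $f\in S_2(\G_{\wW})$ corresponds to $f(\tau)\,d\tau=\omega$, a regular $1$-form on $\wW$. On a smooth plane model (or on $W$ with adjoint conditions at the four nodes), the canonical forms have the shape
\[
\omega=\frac{P(X,Y,1)\,dX}{\partial f/\partial Y},
\]
where $P$ ranges over cubics vanishing at the four nodes $[\pm1:\pm1:1]$. This gives $10-4=6$ dimensions, as required.

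\textbf{Step 2: expand at the cusp.} Pick a point $y_\infty\in\beta^{-1}(\infty)$ corresponding to the cusp $\infty$ of $X_{\wW}$. The ramification index there is $12/2=6$ relative to $\lambda$, since the width of $\infty$ for $\G(2)$ is $2$. Use the $q_2$-expansion of $\lambda$,
\[
\lambda=16q_2-128q_2^2+\cdots,
\]
which has integral coefficients and leading coefficient $16$. The local parameter at $y_\infty$ is then a sixth root of a function of $\lambda$, so the local coordinates $X,Y$ become Puiseux series in $q_{12}$, up to an algebraic normalization constant $c$ with $q_{12}=c\,t$.

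\textbf{Step 3: reduce to a $p$-adic integrality question.} Take $f=\sum a_n q_{12}^n$ with $a_n\in\bQ$. If the denominators were bounded, then $f$ would have $p$-integral coefficients for every $p$ after scaling. Also $2\pi i\,d\tau = 12\,dq_{12}/q_{12}$. So $\omega$, written in the variable $q_{12}$, would have $p$-adically bounded coefficients.

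The idea is to pick $p=5$, where $\wW$ degenerates: the reduction of $W$ mod $5$, or the $S_5$-cover mod $5$, has wild ramification, since $5$ divides $|S_5|$ and the inertia element $\sigma$ of order $5$ matters. Concretely, $\beta$ involves the constant $3125=5^5$, so the map $\lambda\mapsto$ (local parameter) involves extracting roots in a way that introduces $5$-adic denominators.

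\textbf{Step 4: the main obstacle.} I would compute the Puiseux series of $X,Y$ at $y_\infty$ in terms of $t=(\text{local function})^{1/6}$ explicitly, to enough order. Then I would show the following: for every nonzero differential in the $6$-dimensional space, some coefficient of its expansion in $q_{12}$ has $5$-adic valuation tending to $-\infty$ along a subsequence of $n$. The cleanest route is as follows.

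(a) Show that the composite power series $q_{12}\mapsto t$ has coefficients with $5$-adically unbounded denominators. This comes from a root extraction of a series in $\lambda$ whose reduction mod $5$ is degenerate, so the degeneration mod $5$ forces a genuine $5$-adic non-integrality.

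(b) Use linear independence of the six $\omega$'s mod $5$ to transfer the non-integrality to each nonzero $\omega$. This amounts to showing that no nonzero combination can cancel the bad denominators; one checks this on the reduced curve over $\F_5$, where the leading terms of the $6$ expansions are independent.

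This last step is where I expect the real work. If the direct approach stalls, I would instead argue by contradiction via the Atkin–Swinnerton-Dyer style criterion: a form with bounded denominators would give a $5$-integral differential whose reduction is a differential on the special fiber compatible with $\lambda$'s reduction. The degeneration over $\F_5$ (the $S_5$-cover becoming inseparable or acquiring the wrong genus) makes such a compatible $6$-dimensional family impossible.
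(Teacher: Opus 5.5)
Your outline picks the right prime, but Step~4(b), which has to carry the whole proof, does not work as stated.

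\textbf{The right normalization.} Set $Q=\kappa q_{12}$ with $\kappa^6=-2^6\cdot 5^{-5}$. The cusp $\infty$ is the point $[1:1:\sqrt{-3}]$ over $\lambda=\lambda(\infty)=0$. It is not a point of $\beta^{-1}(\infty)$, where the ramification index is $2$. At this point all six $\omega_j$ have expansions in $\mathcal{O}_K[\tfrac12][[Q]]\,dQ$. So after normalizing a combination $\omega$ to be primitive, the $5$-adic denominators of its $q_{12}$-coefficients come only from the powers $\kappa^n$, with $\ord_5\kappa=-\tfrac56$. Bounded $q_{12}$-denominators would force the mod-$5$ reduction of the $Q$-expansion to be a \emph{polynomial} in $Q$. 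Conversely, it suffices to show that this reduction has infinitely many nonzero terms.

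\textbf{Why Steps 4(a) and 4(b) do not get there.} Linear independence of the reductions of $\omega_1,\dots,\omega_6$ only shows that the reduction of a normalized combination is nonzero. It gives no control over whether that reduction is a polynomial. A leading-term check would not even separate the six forms, since all six $Q$-expansions start with $-\tfrac18$ or $-\tfrac{\sqrt{-3}}{8}$. Likewise, unboundedness for the local parameter in Step~4(a) does not propagate to arbitrary differentials. Your fallback also aims at the wrong target: you must exclude every single nonzero form, not a $6$-dimensional family. Note too that $W_{\F_5}$ is a rational curve, so the reductions are meromorphic differentials on $\PP^1_{\F_5}$ with poles, not regular differentials on a curve of genus $6$.

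\textbf{First missing ingredient: an obstruction to polynomiality.} Parametrize $W_{\F_5}$ by $x=(z^2+1)^3/z^3$, $y=2(z^2-1)^3/z^3$. Then $Q^6\equiv P(z)\pmod 5$ with $\deg_z P=120$. The relevant reduced differentials have the form $R(z)\,dQ/Q$ with $\deg_z R=30$. If $R(z(Q))=Q^iG(Q^6)$ with $G$ a polynomial, then $R^6=P^iG(P)^6$, so $180=120(i+6\deg G)$, which is impossible.

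\textbf{Second missing ingredient: a device that rules out cancellation in linear combinations.} The inertia generator $\sigma_0$ at the cusp multiplies $Q$ by a primitive sixth root of unity. Hence the $\sigma_0^*$-eigencomponents $\eta_i$ of $\omega_2$ are exactly its parts with exponents $\equiv i\pmod 6$, and each is non-polynomial mod $5$ by the degree count. Since $\omega_2$ is a cyclic vector for $\sigma_0^*$, every $\omega$ is $G(\sigma_0^*)\omega_2$ with $\deg G<6$. Its coefficient in degree $6m+i$ is therefore $G(\zeta_6^i)a_{6m+i}$. As $Y^6-1$ is separable mod $5$, some $G(\zeta_6^i)$ is a $5$-adic unit after normalization, and that residue class carries infinitely many unit coefficients.

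Concretely, $5$ enters through the rationality of $W_{\F_5}$ and through the constant $5^5$ in $\beta$ that produces $\kappa$. No analysis of wild ramification or of an order-$5$ inertia element is needed; the inertia groups of $\beta$ have orders $6,4,2$.
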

We should note that the unbounded denominators conjecture 
for any weak modular forms has already been established for all noncongruence subgroups of $\SL_2(\Z)$ by Dong, Lin, and Ng \cite{DLN} and by Calegari, Dimitrov, and Tang \cite{CDT}. Our approach, however, is based on a degeneration of the Wiman curve over the finite field $\F_{5}$ together with explicit Puiseux series expansions, and is quite different from their methods. It would be of independent interest to extend our approach to more general noncongruence subgroups of $\SL_2(\Z)$.

The paper is organized as follows. In Section \ref{bwc}, we recall some basic facts about the Wiman curve over an arbitrary field of characteristic different from $2,3,5$ and study certain properties which 
are not previously covered.  
Section \ref{abfWiman} is devoted to computing an explicit Belyi function on $\wW$ over $\Q$ and applying Galois theory to the covering induced by 
the full automorphism group of $\wW$. In Section \ref{modularparacuspform}, we describe a complex uniformization of the Wiman curve and analyze the Fourier expansions of cusp forms for $\G_{\wW}$. We present two approaches to the uniformization: the first is a direct application of Puiseux series expansions, while the second is better suited to the study of cusp forms 
of weight 2. Finally, in Section \ref{unbounded-dc}, we prove the unbounded denominators conjecture 
in weight 2 cusp forms for $\G_{\wW}$.

\textbf{Acknowledgments.} We would like to thank Professor Ling Long 
for valuable comments. The second author was partially supported supported by JSPS KAKENHI Grant Number 
JP26K22241.

\subsection{Notation}
We conclude this introduction with some notation used throughout the paper. 
\begin{itemize}
\item For a field $k$, let $k^{{\rm sep }}$ be the separable closure of $k$ in 
a fixed algebraically closed field of $k$.    
\item Let $\bQ$ (resp. $\bZ$) be the set of all algebraic numbers (resp. algebraic integers) in $\C$.
\item For an algebraic curve $X$ over a field $k$ and an extension $L/k$, 
we denote by $\Aut_L(X)$ (resp. $\Bir_L(X)$) the group of all automorphisms 
(all birational morphisms) defined over $L$. 
For a Galois covering $\beta:X\lra Y$ over a field $L$ between algebraic curves over $L$, 
we denote by $\Aut_L(\beta)$ the group of all automorphisms over $L$ 
commuting with $\beta$. We drop $L$ if $L$ is an algebraically closed field. 
\item For each rational prime $p$, let $\Q_p$ (resp. $\Z_p$) be the field of $p$-adic numbers 
($p$-adic integers). 
\item If $K$ is an algebraic extension of $\Q$ or $\Q_p$ with $p$ prime, 
we denote by $\O_K$ the ring of integers of $K$.  
\item For a set $X$, we denote by $|X|$ its cardinality, 
\end{itemize}

\section{Basic properties of the Wiman curve over a field}\label{bwc}
For the contents in this section, we refer to \cite{GKLM}. 
Let $k$ be a field of characteristic different from $2,3,5$.  
The Wiman curve $W=W_k$ over $k$ is defined by the following sextic equation 
\begin{equation}
  W:\quad
  f(X,Y,Z):=X^6+Y^6+Z^6+(X^2+Y^2+Z^2)(X^4+Y^4+Z^4)  -12X^2 Y^2 Z^2=0
\end{equation}
inside $\PP^2$ with the projective coordinates $[X:Y:Z]$. 
The curve $W$ has four ordinary double points $[\pm 1:\pm 1:1]$ as the singularity, and its normalization $\wW=\wW_k$ is  a smooth 
projective geometrically connected algebraic curve over $k$ of genus $6$ (\cite{GKLM}). We also call $\wW$ the Wiman curve over $k$. It is easy to see that the blowing up $\pi:\wW\lra W$ at the 
center that is the subvariety $Z$ of the four ordinary double points is 
defined over $k$. The strict transform of each point of $Z$ consists of two points exactly defined over $k(\sqrt{-3})$ but as a scheme it is defined over $k$. 
Thus, $|\widetilde{Z}(k)|=8$ if $k$ contains $\sqrt{-3}$, $|\widetilde{Z}(k)|=0$ otherwise where 
$\widetilde{Z}$ is the strict transform of $Z$ regarded as a $k$-scheme.

The full automorphism group $\Aut_{k^{{\rm sep }}}(\wW)$ is isomorphic to $S_5$ (\cite[Theorem 3.4]{GKLM}) and its generators are those explained in Section \ref{intro}. 
Since the singular points and the generators of $\Aut_{k^{{\rm sep }}}(\wW)$ explained as above are defined over the base field, $\Aut_{k}(\wW)$ coincides with $\Aut_{k^{{\rm sep }}}(\wW)$. 

\begin{prop}\label{basicpro}Keep the assumption on $k$. 
For the Wiman curve $\wW$ over $k$, it holds the following properties:
\begin{enumerate}
\item $\wW$ is non-hyperelliptic.
\item $\wW$ is not trigonal. Namely, $\wW$ does not admit any maps $\wW\lra \PP^1$ over $k^{{\rm sep }}$ of degree 3. 
\item The Jacobian variety $J(\wW)$ of $\wW$ is isogenous over $k$ to 
the 6-th product $E^6$ of which $E$ is the elliptic curve defined by $y^2+xy=x^3-828x+9072$. 
If $k=\Q$, the elliptic curve $E$ is labelled as LMFDB label 150.c1.
\item if $k=\Q$, $\wW$ has good reduction at each prime $p$ other than $2,3,5$. Namely, $\wW_{\Q_p}$ 
has an integral smooth model $\widetilde{\mathcal{W}}$ over $\Z_p$ such that
$\widetilde{\mathcal{W}}\otimes_{\Z_p}\Q_p\simeq \wW_{\Q_p}$ and 
$\widetilde{\mathcal{W}}\otimes_{\Z_p}\F_p\simeq \wW_{\F_p}$. 
\end{enumerate}
\end{prop}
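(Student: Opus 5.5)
The plan is to prove the four items in order. Items (1) and (2) come from the canonical model; item (3) comes from the $S_5$-action; item (4) comes from writing down an explicit smooth model over $\Z[1/30]$.

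For (1), I use the adjoint (Noether) description of canonical divisors of a plane curve with ordinary double points. Adjoint curves of degree $d-3=3$ through the four nodes $[\pm1:\pm1:1]$ cut out the canonical system. Cubics form a $10$-dimensional space, and the four conditions are independent, so $h^0(K)=6=g$ as expected. The canonical map is therefore the map $\wW\to\PP^5$ given by these cubics. It is birational onto its image because, for instance, the subsystem $\{Z\cdot(\text{conics through the nodes})\}$ already separates general points. A curve is non-hyperelliptic exactly when its canonical map is an embedding, so this gives (1). It also supports Proposition \ref{diff-can}(2).

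For (2), I use Petri's theorem. For a non-hyperelliptic curve of genus $6$, the canonical ideal is generated by quadrics unless the curve is trigonal or a plane quintic. So it suffices to show that the $h^0(\mathcal I(2))=\binom{7}{2}-(3g-3)=6$ quadrics containing the canonical curve have a one-dimensional intersection, and not the rational normal scroll that appears in the trigonal case. There is a cleaner route via the automorphism group. A $g^1_3$ on a genus $6$ curve is unique (Maroni), so it would be fixed by all of $S_5$. Then $S_5$ would act on the base $\PP^1$ of the trigonal map. But $S_5$ does not embed in $\mathrm{PGL}_2$, so the kernel of this action is a nontrivial normal subgroup, namely $A_5$ or $S_5$. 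That kernel acts on the fibres, which have only $3$ points, and this contradicts $|A_5|=60$ together with the Riemann--Hurwitz count of the ramification of $\wW\to\wW/A_5$. I also rule out a plane quintic model, since a plane quintic has genus $6$ but is not trigonal. Working over $k^{\rm sep}$ costs nothing, since the characteristic is prime to $|S_5|$, so the quotient arguments remain tame.

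For (3), I use the decomposition of $J(\wW)\otimes\Q$ as an $S_5$-representation. Its character is computed by Chevalley--Weil (equivalently, via fixed-point counts from Lefschetz). I expect it to be $6$ copies of a single $6$-dimensional-type isotypic piece, which forces $J(\wW)\sim E^6$ for the quotient curve $E=\wW/H$ by a suitable subgroup $H$ with $g(\wW/H)=1$. The natural choice is $H\cong S_4$ or a dihedral subgroup, whose quotient has genus $1$ by Riemann--Hurwitz. To get the factor over $k$, I compute that quotient explicitly. The $\mu_2^3$-invariants $x=X^2, y=Y^2, z=Z^2$ reduce $W$ to a curve in the weighted plane, and further taking symmetric functions should give a genus-one curve. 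I then bring this curve to Weierstrass form and match its $j$-invariant and conductor with 150.c1. Isogeny over $k$ then follows from Kani--Rosen applied to $S_5$-idempotents, all of which are defined over $\Q$.

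For (4), I take the model of Proposition \ref{diff-can}(2), namely the canonical model in $\PP^5_{\Z_p}$ cut out by the integral quadrics. I check that the Jacobian criterion holds modulo $p$ for $p\neq 2,3,5$. Equivalently, I check that the reduction of $W$ mod $p$ still has exactly the four ordinary double points as singularities. This reduces to showing that the discriminant-type resultants coming from $\partial f/\partial X,\partial f/\partial Y,\partial f/\partial Z$ have prime factors only among $2,3,5$. Blowing up the nodes over $\Z_p$ then gives a smooth proper model. I expect the main obstacle to be (3), specifically identifying the correct genus-one quotient and its exact Weierstrass model over $\Q$. That step requires an explicit invariant-theoretic computation.
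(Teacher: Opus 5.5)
\textbf{Item (3): a step that fails.} The step in (3) that is supposed to identify $E$ does not work as written. Neither $S_4$ nor your explicit tower of invariants gives a genus-one quotient.

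\begin{itemize}
\item $\wW/S_4$ is rational: it is the curve $Y\cong\PP^1_t$, $t=B/A^2$, of Section 3, where Riemann--Hurwitz gives $g(Y)=0$.
\item Even the quotient by the sign-change group $\mu_2^3/\Delta\mu_2$ is rational. In $u=X^2$, $v=Y^2$, $w=Z^2$ the sextic becomes the plane cubic $u^3+v^3+w^3+(u+v+w)(u^2+v^2+w^2)-12uvw=0$, which is singular at $[1:1:1]$.
\end{itemize}

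So ``$\mu_2^3$-invariants, then symmetric functions'' computes $\PP^1$ and never $E$.

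The representation theory should be set up differently. $H^0(\wW,\Omega^1)$ is itself the irreducible $6$-dimensional representation $V_6=\wedge^2(\mathrm{std})$ of $S_5$. Its character is $6$ at the identity, $-2$ on double transpositions, $1$ on $5$-cycles, and $0$ elsewhere; this matches the traces of $M_{h_1},M_{h_2},M_\alpha,M_{\sigma_0}$ in Section 5.1. Hence $H^1(\wW,\Q)\cong V_6^{\oplus 2}$, $J(\wW)\sim B^6$ with $\dim B=1$, and $J(\wW/H)\sim B^{\dim V_6^H}$.

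You therefore need a subgroup $H$ with $\dim V_6^H=1$. This holds for:
\begin{itemize}
\item the $S_3$ of coordinate permutations alone;
\item $\langle\sigma_0\rangle$;
\item a cyclic group of order $4$;
\item the Klein four-group generated by $[X:Y:Z]\mapsto[Y:X:Z]$ and $[X:Y:Z]\mapsto[X:Y:-Z]$, whose invariants are $X+Y$, $XY$, $Z^2$.
\end{itemize}
It fails for $S_4$, $A_4$, $D_4$, $D_5$ and the normal Klein four-group. With a correct $H$, the rest of your plan goes through: Weierstrass model, matching with 150.c1, and descent to $k$ via rational idempotents. The paper itself just quotes \cite[Lemma 2.2]{Kawa} and the LMFDB.

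\textbf{Item (1): a minor slip.} The conics through the four nodes form only the pencil $aX^2+bY^2+cZ^2$ with $a+b+c=0$. So $Z\cdot(\text{conics through the nodes})$ gives the map $[X^2-Z^2:Y^2-Z^2]$ to $\PP^1$ and separates no general points. Use instead $Xq,Yq,Zq$ with $q=X^2-Z^2$ (the paper's $\omega_1,\omega_2,\omega_3$); their ratios recover $[X:Y:Z]$, so the canonical map is birational. Alternatively, a hyperelliptic involution would be central in $\Aut(\wW)\cong S_5$, which has trivial centre. The paper argues differently, via the smooth plane quartic $\wW/\langle[Y:X:Z]\rangle$.

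\textbf{Item (2): a different, valid route.} Your argument is correct and cleaner than the paper's. Uniqueness of a $g^1_3$ in genus $\ge 5$ gives a homomorphism $S_5\to\mathrm{PGL}_2(\overline{k})$. This cannot be injective when $\mathrm{char}\,k\neq 5$ (Klein's list in characteristic $0$, Dickson's in characteristic $p$; note $S_5\cong\mathrm{PGL}_2(\F_5)$, consistent with $W_{\F_5}$ being rational). So the kernel contains $A_5$, and it would act freely on a general fibre of $3$ points, which is absurd; Riemann--Hurwitz is not needed.

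The paper instead descends a trigonal map to $\Q$ via \cite{NS} and reduces modulo $13$, contradicting $|\wW(\F_{13})|=50>3\cdot 14$. It then handles characteristic $p$ by lifting tame covers \cite{RW}. Your argument is uniform in the characteristic and uses only the faithful $S_5$-action.

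\textbf{Item (4).} This is the paper's construction: blow up the nodes over $\Z[1/30]$.
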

\begin{proof}For the fourth claim, by direct computation, we can find 
a smooth model over $\Z[1/30]$ by using blowing-ups (\cite[Section 8.1]{Liu}). 
The third claim follows from \cite[Lemma 2.2]{Kawa} and the label of $E$ when $k=\Q$ 
can be read off in \cite{LMFDB}. 

For the first claim, the quotient of $\wW$ by the involution $[X:Y:Z]\mapsto [Y:X:Z]$ yields 
a quartic plane curve in $\PP^2_{[u:v:w]}$ defined by 
$$19 u^2 v^2-12 u^3 v+13 u^2 w^2+2 u^4-6 u v^3-42 u v w^2+8 v^2 w^2+24 w^4=0$$
with $(u,v,w)=((X+Y)^2,X Y+Z^2,Z (X+Y))$. By direct computation, it is smooth. 
Thus, it is a non-hyperelliptic (cf. \cite[IV, p.315, EXERCISES 3.2(c)]{Ha}). Any quotient of a hyperelliptic curve can not be non-hyperelliptic (notice that the hyperelliptic involution 
is in the center of the full automorphism group). 
The claim follows. 

Finally, we prove the second claim. 
Assume $\wW_{\bQ}$ is trigonal. Hence, there exists a morphism 
$\wW_{\bQ}\lra \PP^1_{\bQ}$ of degree 3. By \cite[Theorem 2.1]{NS}, 
there exists a morphism $\wW_{\Q}\lra \PP^1_{\Q}$ over $\Q$ of degree 3. 
Let $\ell=13$. Apply \cite[Lemma 5.1]{NS} and the fourth claim, there exists  
 a morphism $\wW_{\F_\ell}\lra \PP^1_{\F_\ell}$ over $\F_\ell$ of degree less than or equal to 3. 
 Thus, we have $|\wW_{\F_\ell}(\F_\ell)|\le 3 |\PP^1_{\F_\ell}(\F_\ell)|=42$. 
 However, $|\wW_{\F_\ell}(\F_\ell)|=|W_{\F_\ell}(\F_\ell)|+|\widetilde{Z}(\F_\ell)|-
 |Z(\F_\ell)|=46+8-4=50$ 
 since $\sqrt{-3}\in \F_{\ell}$. Thus, it gives a contradiction. 
 Therefore, $\wW_{\bQ}$ is not trigonal and so is $\wW_\C$.  
 
We now turn to the general case. Let $k'$ be the prime field contained in $k$. 
 If $k'=\Q$, it is done. Thus, we assume $k'=\F_p$ for some prime $p$ different from $2,3,5$.  
 Assume $\wW_{\overline{k}}$ is trigonal. 
 Then, it is easy to see that $\wW_{k_1}$ is trigonal over $k_1$ for some finite extension 
 $k_1/k'$. 
 We briefly explain this. It is well-known that the functor ${\rm Mor}^{(3)}_{k'}(\wW_{k'},\PP^1_{k'})$ consisting of all degree 3 morphisms from $\wW_{k'}$ to $\PP^1_{k'}$ 
 is a finite type $k'$-scheme. In fact, the functor ${\rm Mor}_{k'}(\wW_{k'},\PP^1_{k'})$ is 
 represented by an open subscheme of the Hilbert scheme ${\rm Hilb}_{(\wW\times_{k'} \PP^1)/k'}$. Thus, ${\rm Mor}^{(3)}_{k'}(\wW_{k'},\PP^1_{k'})$ is an open subscheme of a finite type 
 scheme ${\rm Hilb}^P_{(\wW\times_{k'} \PP^1)/k'}$ for the Hilbert polynomial $P$ 
 given by $P(n)=(\deg(K_{\wW})+3)n+1-g(\wW)=13n-5$   
 (cf. \cite[Theorem 5.20,Theorem 5.23]{Fantechi}). Hence, ${\rm Mor}^{(3)}_{k'}(\wW_{k'},\PP^1_{k'})$ 
 is a finite type $k'$-scheme. 
 It is non-empty since it has a $k$-rational point by assumption. 
Therefore, ${\rm Mor}^{(3)}_{k'}(\wW_{k'},\PP^1_{k'})$ has a $k_1$-rational point 
for a finite extension $k_1/k'$ by Zariski's lemma. 
 Thus, we may assume that $\wW_{k_1}$ is trigonal so that its base change 
 $\wW_{\overline{k}'}$ is also trigonal.  
 By \cite[Theorem 2.1]{NS} again, 
there exists a morphism $f:\wW_{k'}\lra \PP^1_{k'}$ over $k'$ of degree 3. 
Since $p\neq 3=\deg(f)$, 
the ramification of $f$ is tame. By \cite[Proposition 4.5.]{RW}, 
there exists a finite extension $K/\Q_p$ such that 
$\tilde{f}:\wW_{\O_K}\lra \PP^1_{\O_K}$ is a lift of the base change of $f$ to the residue field of $K$. Then, its generic fiber yields a morphism $\wW_K\lra \PP^1_K$ of degree 3 since 
it is flat over $\O_K$.  
However, it contradicts with the claim for the characteristic zero case under an identification 
$\overline{K}\simeq \C$. 
\end{proof}

\begin{prop}\label{diff-can} Keep the notation as above. 
Let $K_{\wW}$ denote a canonical divisor of $\wW$. 
\begin{enumerate}
\item $H^0(\wW,\Omega^1_{\wW})$ has 
a basis given by 
$$
  \omega_1
  =
  \frac{x(x^2-1)\,dx}{F_y(x,y)},\ 
  \omega_2
  =
  \frac{y(x^2-1)\,dx}{F_y(x,y)},\ 
  \omega_3
  =
  \frac{(x^2-1)\,dx}{F_y(x,y)},
$$
$$
  \omega_4
  =
  \frac{x(y^2-1)\,dx}{F_y(x,y)},\ 
  \omega_5
  =
  \frac{y(y^2-1)\,dx}{F_y(x,y)},\ 
  \omega_6
  =
  \frac{(y^2-1)\,dx}{F_y(x,y)}
$$
where $F_y(x,y)=\ds\frac{\partial f(x,y,1)}{\partial y}$ is the formal partial derivative 
of $f(x,y,1)$ in $y$.  
\item The canonical map $\Phi_{|K_{\wW}|}:\wW_k\lra \PP^5_k$ sending $P$ to 
$[\omega_1(P):\omega_2(P):\omega_3(P):\omega_4(P):\omega_5(P):\omega_6(P)]$ is 
a closed embedding over $k$ and its image is a smooth projective geometrically connected model over $k$ of $\wW$ in $\PP^5_k$  whose defining equations are given by 
\[
\begin{aligned}
2x_1^2+x_2^2+5x_3^2+x_4^2+2x_5^2-8x_3x_6+5x_6^2&=0,\\
x_2^2-x_3^2-x_1x_4+x_3x_6&=0,\\
x_2x_4-x_1x_5&=0,\\
x_3x_4-x_1x_6&=0,\\
x_4^2-x_2x_5+x_3x_6-x_6^2&=0,\\
x_3x_5-x_2x_6&=0,
\end{aligned}
\]
where $(x_1,x_2,x_3,x_4,x_5,x_6)=(x(x^2-1),y(x^2-1),x^2-1,x(y^2-1),y(y^2-1),y^2-1)$. 
\end{enumerate}
\end{prop}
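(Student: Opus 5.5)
The plan is to use adjoint curves. For a plane curve with only ordinary singularities, the holomorphic differentials on the normalization are exactly the forms $\omega = g(x,y)\,dx/F_y(x,y)$. Here $g$ ranges over polynomials of degree $\le d-3 = 3$ that vanish at each ordinary double point; this is the classical adjoint condition (Noether, cf. \cite{BK}). The $\omega_i$ are listed in the affine chart $Z=1$, so I will check that the polynomials $x(x^2-1), y(x^2-1), x^2-1, x(y^2-1), y(y^2-1), y^2-1$ all have degree $\le 3$. Each vanishes at the four nodes $(\pm1,\pm1)$, because every one contains a factor $x^2-1$ or $y^2-1$. It remains to see that they are linearly independent; this is clear from their distinct leading monomials $x^3,x^2y,x^2,xy^2,y^3,y^2$.

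The space of cubics is $10$-dimensional, and vanishing at $4$ points imposes at most $4$ conditions, so the adjoint cubics form a space of dimension $\ge 6$. That space injects into $H^0(\wW,\Omega^1)$, which has dimension $g=6$. Hence our six forms are a basis. I also need to check that nothing goes wrong at the points at infinity: the line $Z=0$ meets $W$ in points where $X^6+Y^6+(X^2+Y^2)(X^4+Y^4)=0$. For (1), I will verify that the standard argument is valid in characteristic $\ne 2,3,5$. The adjoint description holds over any field as long as the singularities stay ordinary nodes and the genus stays $6$, and both facts are recorded earlier in this section.

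For (2), Proposition \ref{basicpro}(1) says $\wW$ is non-hyperelliptic, so the canonical map is a closed embedding onto a canonical curve of degree $2g-2=10$ in $\PP^5$. Since $\Phi$ is given by the $\omega_i$ and they share the common denominator $F_y$, in coordinates it is $P\mapsto (x_1:\dots:x_6)$ with the stated polynomials. The six quadrics are verified directly. The four binomial ones follow from the definitions, for instance $x_2x_4-x_1x_5 = xy(x^2-1)(y^2-1)-xy(x^2-1)(y^2-1)=0$. The remaining two reduce, after substitution, to multiples of $f(x,y,1)$, which is a routine polynomial identity. Because $\wW$ is not trigonal (Proposition \ref{basicpro}(2)) and its genus is $6$, Petri's theorem says the canonical ideal is generated by quadrics, and $\dim I_2 = \binom{7}{2}-(3\cdot 10+1-6)... = 21-15=6$. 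So these six quadrics are exactly the quadrics containing the curve, provided they are linearly independent.

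The main obstacle is the exceptional smooth plane quintic case, which Petri's theorem allows in genus $6$. That case is ruled out separately: a plane quintic has a $g^2_5$, and then the intersection of the quadrics is a Veronese surface rather than the curve. I will exclude it either by showing that the six quadrics cut out a scheme of dimension $1$ (a Hilbert polynomial computation such as $10n-5$), or by showing that $\wW$ has no $g^2_5$. The Wiman sextic is known not to be a plane quintic, and one can alternatively count points over $\F_{13}$ as in the proof of Proposition \ref{basicpro}(2). This settles smoothness and geometric connectedness of the image, since the image is isomorphic to $\wW$.
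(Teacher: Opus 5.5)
Your proposal is correct and follows the paper's route. Part (1) uses adjoint differentials, and part (2) uses the canonical embedding, direct verification of the six quadrics, $\dim I_2=6$, and Petri's theorem with non-trigonality; you are more careful than the paper, whose proof never excludes the smooth plane quintic case of Petri's theorem.

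Your dimension-count option does exclude that case. Only three of the quadrics are binomials, and they cut out the Segre $\PP^1\times\PP^2$, with $(x_1,\dots,x_6)=(s_0t_0,s_0t_1,s_0t_2,s_1t_0,s_1t_1,s_1t_2)$. The second and fifth quadrics also vanish identically under your substitution, while the first equals $f(x,y,1)$. On the Segre variety, the second and fifth quadrics become $s_0Q$ and $-s_1Q$ with $Q=s_0(t_1^2-t_2^2)-s_1(t_0^2-t_2^2)$. This gives the quintic del Pezzo surface, on which the first quadric cuts a curve.

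By contrast, an $\F_{13}$ count as in Proposition \ref{basicpro}(2) would not suffice. Projecting a plane quintic from a rational point only gives $|\wW(\F_{13})|\le 4\cdot 14=56$, which does not contradict $50$.
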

\begin{proof}The first claim follows by direct computation. 
For the second claim, it  follows from Petri's theorem (see \cite[Section 2.2]{Shi}) with 
the first claim and Proposition \ref{basicpro}(2) that the image is defined by 
$\frac{1}{2}(g(\wW)-2)(g(\wW)-3)=6$ quadratic equations. Here we discarded any cubic relations by Proposition \ref{basicpro}(2). 
  We may determine these equations by the method of undetermined coefficients using the relation $f(x,y,1)=0$. 
\end{proof}

\section{An algebraic Belyi function on the Wiman curve over $\Q$}\label{abfWiman} 
In this section, we explicitly construct a Belyi function on the Wiman curve $\wW$ over $\C$ by using 
the quotient by the full automorphism group and its non-Galois covering of degree 5 as an intermediate step. 

We write $G:=\Aut_{\C}(\wW)$ and identify it with  $S_5$ 
and let $G$ act on $\wW$ from the left. 
As explained in Section \ref{intro}, by \cite[Theorem 2.1, Lemma 2.9]{FL}, 
the quotient $G\bs \wW$ is isomorphic to $\mathbb{P}^1_\lambda$ with a parameter $\lambda$, and the corresponding Galois cover
\begin{equation}\label{GC}
  \pi_G:\wW \longrightarrow G\bs\wW \simeq \mathbb{P}^1_\lambda 
\end{equation}
has three branch points normalized to be $0,1,\infty$. 
Then, for each $y_x\in \pi_G^{-1}(x)$ with $x\in \{0,1,\infty\}$, 
\begin{equation}
  I_0(y_0)\sim \langle \sigma_0=(132)(45)\rangle,\ 
  I_1(y_1)\sim \langle \sigma_1=(1542)\rangle,\ 
  I_\infty(y_\infty)\sim \langle \sigma_\infty=(14)(23)\rangle
\end{equation}
and they 
have orders $6,4,2$, respectively. 
We fix such $y_0,y_1,y_\infty$ and simply write $I_x=  I_x(y_x)$ for each $x\in\{0,1,\infty\}$. 

\subsection{A certain intermediate quotient}
Let $H$ be the subgroup of $G$ which is isomorphic to $S_4=\{\sigma\in S_5\ |\ \sigma(5)=5\}$. 
Put $Y=H\bs \wW$. Let $\pi_{\wW,Y}:\wW\lra Y$ be the natural projection.  
The natural quotient map induced from the inclusion $H\subset G$ 
\begin{equation}
\alpha: Y\lra G\bs \wW=\mathbb{P}^1
\end{equation}
 is a non-Galois cover of degree $5$. 
For each $x\in \{0,1,\infty\}$, 
we see easily that $\alpha^{-1}(x)=\pi_{\wW,Y}(\pi^{-1}_G(x))=H\bs Gx$ 
is identified with 
$$H\bs G/I_x.$$
The quotient  $H\backslash G$ can be identified
with $\{1,\dots,5\}$ by
\begin{equation}
  Hg\longmapsto g^{-1}(5).
\end{equation}
Under this identification, the right multiplication by $\sigma\in G$ on $H\bs G$ corresponds to
the usual left action of $\sigma^{-1}$ on $\{1,\dots,5\}$. 
Put $C_i:=H(i5)$ for $1\le i\le 5$. Then, $H\bs G=\{C_{1},\ldots,C_5\}$ and the above identification sends $C_i$ to $i$ for each 
$1\le i\le 5$. 
We now compute the $I_x$-orbits of $H\bs G=\{C_{1},\ldots,C_5\}$. 
For $x=0$, 
$$C_1\stackrel{\sigma_0}{\lra}C_2\stackrel{\sigma_0}{\lra}C_3\stackrel{\sigma_0}{\lra} C_1,\ 
C_4\stackrel{\sigma_0}{\lra}C_5\stackrel{\sigma_0}{\lra}C_4.$$
Thus, $H\bs G=\{\O_{I_0}(C_1),\ \O_{I_0}(C_4)\}$ with $|\O_{I_0}(C_1)|=3$ and $|\O_{I_0}(C_4)|=2$. 
Therefore, $\alpha^{-1}(0)$ consists of two points of ramification indices $3$ and $2$ respectively. 
Similarly, we have $H\bs G=\{\O_{I_1}(C_1),\ \O_{I_1}(C_5)\}$ with $|\O_{I_1}(C_1)|=4$ and $|\O_{I_1}(C_5)|=1$. 
Therefore, $\alpha^{-1}(1)$ consists of two points of ramification indices $4$ and $1$ respectively. 
Finally, we have $H\bs G=\{\O_{I_\infty}(C_1),\ \O_{I_\infty}(C_2),\ \O_{I_\infty}(C_5)\}$ with $|\O_{I_\infty}(C_1)|=
|\O_{I_\infty}(C_2)|=2$ and $|\O_{I_\infty}(C_5)|=1$. 
Therefore, $\alpha^{-1}(\infty)$ consists of three points of ramification indices $2,2$ and $1$ respectively. 
By Riemann-Hurwitz formula, 
$$2g(Y)-2=5(-2)+(3-1)+(2-1)+(4-1)+(1-1)+(2-1)+(2-1)+(1-1)=-2$$
and thus $g(Y)=0$. 

\begin{prop}\label{BelyiY} Keep the notation as above. 
There exists an isomorphism $\mathbb{P}^1_T\simeq Y$ such that 
the composition $\vp:\mathbb{P}^1_T\simeq Y\stackrel{\alpha}{\lra} \PP^1_\la$ is an algebraic Belyi function over $\Q$ given by 
$$ \lambda= -\frac{50000T^3(T-1)^2}
  {(375T^2-50T-1)^2}.$$
\end{prop}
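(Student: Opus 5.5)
Since $g(Y)=0$ by the Riemann--Hurwitz computation above, we fix some isomorphism $\mathbb{P}^1_T\simeq Y$. Then $\vp$ is a rational function of degree $5$. Its ramification data is the one just determined: over $\la=0$ the fibre has one point of index $3$ and one of index $2$; over $\la=1$ it has one point of index $4$ and one unramified point; over $\la=\infty$ it has two points of index $2$ and one unramified point. We will use the freedom of $\mathrm{PGL}_2$ acting on $T$ to normalize: send the point of index $3$ over $0$ to $T=0$, the point of index $2$ over $0$ to $T=1$, and the unramified point over $\infty$ to $T=\infty$. This forces
$$\la=\vp(T)=c\,\frac{T^3(T-1)^2}{(T^2+aT+b)^2}$$
for constants $a,b,c$.

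The remaining condition is that $\vp-1$ has a zero of order $4$. Concretely,
$$cT^3(T-1)^2-(T^2+aT+b)^2=(c-0)\,(T-t_0)^4(T-t_1)\cdot(\text{leading-coefficient adjustment}).$$
Since the left side has degree $5$ with leading coefficient $c$, this reads
$$cT^3(T-1)^2-(T^2+aT+b)^2=c(T-t_0)^4(T-t_1).$$
Comparing coefficients of $T^0,\dots,T^4$ gives five polynomial equations in the five unknowns $a,b,c,t_0,t_1$. An equivalent and more efficient route is to require that $t_0$ is a triple root of the derivative condition: $\vp'(T)$ has numerator $T^2(T-1)\cdot Q(T)$, and $Q$ must vanish to order $3$ at $t_0$. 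We will eliminate with resultants or a Gröbner basis, discard degenerate solutions (those where the numerator and denominator share a factor, or the degree drops), and expect essentially one solution up to Galois conjugation. Because the passport has distinct ramification patterns over the three points, the Belyi map is rigid, and the solution should be rational. It should be
$$a=-\tfrac{2}{15},\qquad b=-\tfrac{1}{375},\qquad c=-\tfrac{50000}{375^2},$$
that is,
$$\la=-\frac{50000T^3(T-1)^2}{(375T^2-50T-1)^2}.$$

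To finish, we verify the result directly. The quantity $(375T^2-50T-1)^2+50000T^3(T-1)^2$ should factor as a constant times $(T-t_0)^4(T-t_1)$ with $t_0,t_1\in\Q$, which can be checked by expanding both sides. The discriminant of $375T^2-50T-1$ is $4000\neq 0$, so the two poles are distinct, and they differ from $0$ and $1$. This confirms the ramification $(3,2)$, $(4,1)$, $(2,2,1)$, so $\vp$ is a Belyi function over $\Q$ with the correct passport. It remains to identify this $\vp$ with $\alpha$ and not with some other degree-$5$ cover of the same passport. For this we compute the monodromy of the cover, or simply count: the number of triples in $S_5$ with these cycle types and product $1$, up to conjugacy, generating a transitive group. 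We expect this to be a single class, namely that of $(\sigma_0,\sigma_1,\sigma_\infty)$. By Riemann existence, the cover is then unique up to isomorphism over $\C$, and the normalization fixes the coordinate $T$.

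The main obstacle is the elimination step: solving the polynomial system and ensuring that the solution with rational coefficients is the unique nondegenerate one. The uniqueness can be cross-checked by the group-theoretic count of monodromy triples.
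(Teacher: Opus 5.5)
Your proposal is correct and is essentially the paper's argument: the same normalization $\varphi=cT^3(T-1)^2/(T^2+aT+b)^2$ with the unramified pole at $T=\infty$, the same requirement that the numerator of $\varphi-1$ equal $c(T-t_0)^4(T-t_1)$, and the same discarding of a degenerate solution. That degenerate solution is $t_0=1$, $T^2+aT+b=(T-1)(T-\tfrac13)$, where a pole collides with $T=1$ and the degree drops to $3$. Your derivative shortcut $Q(T)=T^3+(3a+1)T^2+(5b-a)T-3b=(T-t_0)^3$ reduces the elimination to $(t_0-1)^2(5t_0+1)=0$, which already proves uniqueness and makes the Riemann-existence identification redundant; your expectation there is nonetheless right (there are exactly $120$ such triples, all generating $S_5$, hence a single conjugacy class).
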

\begin{proof}Using M\"obius transformation, we can arrange  $\mathbb{P}^1_T\simeq Y$  such that 
the composition $\vp:\mathbb{P}^1_T\simeq Y\stackrel{\alpha}{\lra} \PP^1_\la$ ramified at $T=0$ and $T=1$ over $\la=0$ 
with the ramification indices $3$ and $2$ respectively.  
Further, we may also assume that the three points over $\la=\infty$ are $T=a,b,\infty$ 
($a\neq b,\ a,b\in \C$) with 
ramification indices $2,2,1$ respectively. 
Thus, $\vp$ takes a form 
$$\vp(T)=c\frac{T^3(T-1)^2}{(T-a)^2(T-b)^2}=c\frac{T^3(T-1)^2}{(T^2-sT+p)^2},\ c,s=a+b,p=ab\in \C.$$
Observing the ramification over $\la=1$, we see that 
$$\vp(T)-1=c\frac{(T-u)^4(T-v)}{(T^2-sT+p)^2},\ u,v\in \C.$$
Solving this equation in $(c,p,s,u,v)$, we have 
$$(c,p,s,u,v)=\left(-\frac{16}{45},-\frac{1}{375},\frac{2}{15},-\frac{1}{5},-\frac{1}{80}\right),\ 
\left(\frac{4}{9},\frac{1}{3},\frac{4}{3},1,\frac{1}{4}\right).$$
The latter solution is discarded since it forces $a=b$. Thus, the former determines 
$\vp$ as desired. 
\end{proof}

\subsection{An algebraic Belyi function on $\wW$ via 
the intermediate cover $Y$}\label{algBelyi}
We now explicitly compute the isomorphism $\PP^1_T\simeq Y$ in Proposition \ref{BelyiY}. 
On the plane model $W$, the subgroup $H\simeq S_4$ is generated by permutations of
the coordinates together with sign changes. Put 
$$A=x^2+y^2+z^2,\  B=x^2y^2+y^2z^2+z^2x^2.$$
It is easy to see that the function
\begin{equation}
  t:=\frac{B}{A^2}
\end{equation}
is $H$-invariant and gives a coordinate on $Y=H\bs\wW\simeq\mathbb{P}^1_t$.
Therefore, the isomorphism 
$\PP^1_T\simeq Y\simeq \mathbb{P}^1_t$ is given by 
$$T=\ds\frac{at+b}{ct+d}$$ for some $a,b,c,d\in \C$ with $ad-bc\neq 0$. 
To determine $a,b,c,d$, we observe the ramification data of the following commutative diagram:
 \[
\xymatrix{
\PP^1_t \hspace{3mm} \ar[dr]_{\pi_{H,G}}^\circlearrowright 
\ar[r]^{\tiny t\mapsto T=\frac{at+b}{ct+d} } & \hspace{3mm}\PP^1_T \ar[d]^{\vp} \\
 & \hspace{3mm} \PP^1_\la.
}
\]
where $\pi_{H,G}:H\bs\wW\lra G\bs\wW$ is the natural projection defined by 
the inclusion $H\subset G$. 

Consider first the transposition
$\tau:[X:Y:Z]\longmapsto[Y:X:Z]$.  
Choose a fixed point $P_\tau=[1:1:\sqrt{-3}]$ of $\tau$ on $\wW$. 
Then, we have 
$$t(P_\tau)=\frac{x^2y^2+y^2z^2+z^2x^2}{(x^2+y^2+z^2)^2}=-5.$$
Notice that $\tau$ is conjugate to $\sigma_0^{3}=(45)$ but not to 
any power of $\sigma_x$ with $x\in \{1,\infty\}$. 
Thus, $\pi_{H,G}(t(P_\tau))=0$ (namely, $P_\tau\in \pi_G^{-1}(0)$). 
Further the ramification index of $\pi_{H,G}$ at $t=-5$ is 
$[\langle \sigma_0 \rangle:\langle \sigma^3_0 \rangle]=3$. 
On the other hand, by Proposition \ref{BelyiY}, the point $T=0$ on $\PP^1_T$ is a unique point 
above $\la=0$ with ramification index $3$.  
Since $T=0$ is the unique point above $\lambda=0$ with ramification index $3$,
we have 
$$\ds\frac{a(-5)+b}{c(-5)+d}=0.$$

Next, consider $\sigma:[X:Y:Z]\longmapsto[Y:Z:X]$.  
Choose a fixed point $P_\sigma=[1:1:1]$ of $\sigma$ on $W$. 
Then, we have 
$$t(P_\sigma)=\frac{x^2y^2+y^2z^2+z^2x^2}{(x^2+y^2+z^2)^2}=\frac{1}{3}$$
and it is a point of $\PP^1_t$ over $\lambda=0$ since $\sigma$ is conjugate to $\sigma^2_0$. 
Further the ramification index of $\pi_{H,G}$ at $t=\frac{1}{3}$ is $2$. 
The corresponding point is $T=1$ by Proposition \ref{BelyiY} again. Thus, we have 
$$\ds\frac{a\cdot \frac{1}{3}+b}{c\cdot \frac{1}{3}+d}=1.$$
Finally, 
by using $(12)(34)$ corresponding to the automorphism $[X:Y:Z]\mapsto [-X:Y:Z]$, 
we can check that $t=\infty$ corresponds to $T=-\frac{1}{80}$ as points over $\la=\infty$. 
Thus, we have $$\ds\frac{a}{c}=-\frac{1}{80}.$$
These three conditions determine the M\"obius transformation 
$$  T=-\frac{t+5}{16(5t-2)},$$
or equivalently  $t=\ds\frac{32T-5}{80T+1}$. 
Summing up, we have proved the following result with Proposition \ref{BelyiY}.
\begin{thm}\label{mainclaim} Recall the morphism $\vp$ in Proposition \ref{BelyiY}. 
The composition 
$$\wW\stackrel{[X:Y:Z]\mapsto t=B/A^2}{\lra}Y=H\bs \wW\stackrel{t\mapsto T=-\frac{t+5}{16(5t-2)}}{\lra} 
\PP_T\stackrel{\vp}{\lra}\PP^1_\la$$
yields the rational function 
$$\beta(X,Y,Z)=\frac{3125 \left(\frac{3 \left(X^2 Y^2+Y^2 Z^2+Z^2 X^2\right)}{\left(X^2+Y^2+Z^2\right)^2}-1\right)^2 \left(\frac{X^2 Y^2+Y^2 Z^2+Z^2 X^2}{\left(X^2+Y^2+Z^2\right)^2}+5\right)^3}{\left(\frac{5 \left(X^2 Y^2+Y^2 Z^2+Z^2 X^2\right)}{\left(X^2+Y^2+Z^2\right)^2}-2\right) \left(-\frac{75 \left(X^2 Y^2+Y^2 Z^2+Z^2 X^2\right)^2}{\left(X^2+Y^2+Z^2\right)^4}+\frac{1010 \left(X^2 Y^2+Y^2 Z^2+Z^2 X^2\right)}{\left(X^2+Y^2+Z^2\right)^2}+13\right)^2}$$
on $\wW$ 
is a generator of $\Q(\wW)^{S_5}=\Q(W)^{S_5}$ and it gives rise to an algebraic Belyi map over $\Q$:
$$\beta:\wW\lra S_5\bs \wW\simeq \PP^1_\la,\ [X:Y:Z]\mapsto \beta(X,Y,Z).$$
\end{thm}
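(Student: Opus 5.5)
The plan is to assemble the statement from three ingredients: the quotient $H\bs\wW\simeq\PP^1_t$ with $t=B/A^2$, the Möbius identification $T=-\frac{t+5}{16(5t-2)}$ derived above, and the degree-$5$ Belyi map $\vp$ of Proposition \ref{BelyiY}.

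\textbf{Step 1: $t$ generates the $H$-invariants.} The function $t=B/A^2$ is visibly invariant under sign changes and coordinate permutations, so $t\in\Q(\wW)^H$. It generates $\Q(\wW)^H$ exactly when $\deg(t:\wW\to\PP^1)=|H|=24$. I would compute this degree from the fibre over a general value $t_0$. Intersect $W$ with the quartic $B-t_0A^2=0$; Bézout gives $24$ points. The four nodes $[\pm1:\pm1:1]$ have $t=\frac{3}{9}=\frac13$, so they do not lie on a general fibre and no intersection multiplicity is lost at the singularities. A check that the intersection is transverse for one specific $t_0$ then gives degree $24$. Hence $\Q(\wW)^H=\Q(t)$.

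\textbf{Step 2: the map $t\mapsto T$ is an isomorphism over $\Q$.} By Proposition \ref{BelyiY} and its proof, the isomorphism $\PP^1_T\simeq Y$ is determined by the ramification data. The argument before the theorem pins down three points: $t=-5\mapsto T=0$, $t=\frac13\mapsto T=1$, and $t=\infty\mapsto T=-\frac1{80}$. A Möbius transformation is determined by three points, so $T=-\frac{t+5}{16(5t-2)}$, which has coefficients in $\Q$.

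\textbf{Step 3: substitute and compute $\beta$.} Substituting $T$ into $\vp$ gives $\beta=\vp(T(t))$, an element of $\Q(t)$ of degree $5$ in $t$, and expanding it yields the displayed formula. For example:
\[
T-1=-\frac{81(t-\tfrac13)}{16(5t-2)}, \qquad 375T^2-50T-1\propto\frac{-75t^2+1010t+13}{(5t-2)^2}.
\]
Then $\deg_t\beta=5=[G:H]$, so $[\Q(\wW):\Q(\beta)]=24\cdot5=120=|S_5|$. Since $\beta\in\Q(\wW)^{S_5}$ (being $H$-invariant and equal to the $G$-quotient map) and $[\Q(\wW):\Q(\wW)^{S_5}]=120$, we get $\Q(\wW)^{S_5}=\Q(\beta)$. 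Its branch locus is that of $\pi_G$, namely $\{0,1,\infty\}$, so $\beta$ is a Belyi map over $\Q$.

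\textbf{Main obstacle.} The delicate point is Step 2, specifically justifying that $t=\infty$ corresponds to $T=-\frac1{80}$ rather than to one of the other two preimages $a,b$ of $\la=\infty$. The points $a,b$ have ramification index $2$, while $-\frac1{80}$ is the unramified preimage, so the issue is to show that $t=\infty$ (the locus $A=0$) is unramified for $\pi_{H,G}$ over $\la=\infty$. I would check this by computing the stabilizer in $S_5$ of a point with $A=0$: it has order $2$ and is generated by a conjugate of $(14)(23)$, for instance $(12)(34)$, which lies in $H$. So the $I_\infty$-orbit of the corresponding coset is trivial and the index there is $1$, as required. As an independent consistency check, I would verify directly that $\beta-1$ factors as a perfect fourth power times a linear factor in $t$.
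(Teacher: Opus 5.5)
Your overall route is the paper's: use $t=B/A^2$ as a coordinate on $Y=H\backslash\widetilde{W}$, fix the M\"obius map $t\mapsto T$ by three point correspondences, and compose with $\varphi$. Your Steps 1 and 3 usefully spell out what the paper calls ``easy to see''. In Step 1 you should also note that the pencil $B-t_0A^2$ has no base points on $W$. This follows from the identity $f=2A^3-5AB-9X^2Y^2Z^2$: on $W$, $A=B=0$ forces $XYZ=0$, and then $X=Y=Z=0$.

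However, the step you single out as the crux is wrong as argued, and the computation you propose for it would fail. In Proposition \ref{BelyiY}, $\varphi$ has numerator of degree $5$ and denominator of degree $4$. So the unramified point of $\varphi^{-1}(\infty)$ is $T=\infty$. By contrast, $T=-\frac1{80}$ is the root $v$ in $\varphi(T)-1=c(T-u)^4(T-v)/(T^2-sT+p)^2$, and indeed $\varphi(-\frac1{80})=1$. If $t=\infty$ really were an unramified point of $\pi_{H,G}$ over $\lambda=\infty$, your own reasoning would force $t=\infty\mapsto T=\infty$. That gives $T=\frac{3(t+5)}{16}$ and a wrong $\beta$.

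Your stabilizer claim is also false. On $W$, $A=0$ forces $XYZ=0$, so the fibre $t=\infty$ consists of the six points $[1:\pm i:0]$, $[1:0:\pm i]$, $[0:1:\pm i]$. The element $[X:Y:Z]\mapsto[X:Z:-Y]$ lies in $H$, fixes $[0:1:i]$, and has order $4$; its square is $[-X:Y:Z]$, i.e. $(12)(34)$. Equivalently, on the conic $A=0$ the curve cuts out $X^2Y^2Z^2=0$. So $A$ vanishes to order $2$ and $t$ has poles of order $4$ at these points, giving $|H_P|=4$.

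Here is the correct argument. $G_P$ is cyclic of order divisible by $4$, hence of order exactly $4$ and conjugate to $\langle\sigma_1\rangle$. So $P$ lies over $\lambda=1$. Moreover $G_P=H_P\subset H$, so $\pi_{H,G}$ is unramified at $t=\infty$. Since $\varphi^{-1}(1)$ consists of $T=-\frac15$ (index $4$) and $T=-\frac1{80}$ (index $1$), this gives $t=\infty\mapsto T=-\frac1{80}$. The paper's phrase ``as points over $\lambda=\infty$'' contains the same slip. Its M\"obius map and $\beta$ are nevertheless correct, because $-\frac1{80}$ lies over $\lambda=1$.

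Your proposed consistency check cannot replace this step. Every M\"obius $M$ with $M(-5)=0$ and $M(\frac13)=1$ makes $\varphi\circ M$ have the same ramification type, so checking the shape of $\beta-1$ does not detect the third condition. In fact
\[ \beta-1=\frac{3(25t-19)^4}{(5t-2)(75t^2-1010t-13)^2}. \]
The numerator is a pure fourth power, and the simple zero sits at $t=\infty$. This agrees with the corrected picture, not with yours.
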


\section{The modular parametrization and cusp forms for $\G_{\wW}$}\label{modularparacuspform}
Recall the classical modular lambda function
\begin{equation}\label{lambda}
  \lambda(\tau)=
  \frac{\vartheta_2(\tau)^4}{\vartheta_3(\tau)^4}=16q_2-128q^2_2+704q^3_2-3072q^4_2+\cdots,\ 
  q_2=e^{\pi i\tau}
\end{equation}
which yields an analytic isomorphism 
$$X(2)=\overline{\G(2)\bs \HH}\stackrel{\sim}{\lra} \PP^1_\la,\ \tau\mapsto \la(\tau).$$
See \cite[Section 1.2, 2.4]{DS} for the principal congruence subgroup $\G(2)$ of level 2 and 
the corresponding compactified modular curve $X(2)$. 
For three cusps $0,1,\infty$ of $X(2)$, 
$$\la(0)=1,\ \la(1)=\infty,\ \la(\infty)=0.$$
Recall the surjective homomorphism 
$\rho:\Gamma(2)\longrightarrow S_5$ associated with the Galois cover $\wW\to\mathbb{P}^1_\lambda$ (see Section \ref{intro}) and the finite index subgroup $\G_{\wW}:={\rm Ker}\rho$ of $\G(2)$.   
\begin{lem}\label{noncong} The group $\G_{\wW}$ is a non-congruence subgroup of $\SL_2(\Z)$ 
with index 720. 
\end{lem}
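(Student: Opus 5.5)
The plan has two parts: the index computation, and Wohlfahrt's level criterion to show noncongruence.

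\emph{Index.} Since $\G(2)$ is free on $T^2$ and $ST^2S^{-1}$, the assignment defining $\rho$ is a homomorphism. It is surjective because $\sigma_0=(132)(45)$ and $\sigma_1=(1542)$ generate $S_5$. Indeed, $\sigma_0^3=(45)$ is a transposition and $\sigma_0^2$ is a $3$-cycle, so the generated group is transitive and contains a transposition. Alternatively, surjectivity follows from the Galois cover \eqref{GC}, whose monodromy group is all of $S_5$. Hence $[\G(2):\G_{\wW}]=120$. Since $-I\notin\G(2)$ and $[\SL_2(\Z):\G(2)]=6$, we get $[\SL_2(\Z):\G_{\wW}]=720$.

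\emph{Cusp widths.} I would compute the widths of all cusps of $\G_{\wW}$. The cusps of $X(2)$ are $\infty,0,1$, with stabilizers in $\G(2)$ generated (up to sign) by $T^2$, by $ST^2S^{-1}$, and by a parabolic element whose image under $\rho$ is conjugate to $\sigma_\infty^{\pm1}$. The relation $\sigma_0\sigma_1\sigma_\infty=1$, or its inverse ordering, should be checked with the paper's conventions. Above a cusp of $X(2)$ of width $2$, a cusp of $X_{\wW}$ has width $2\cdot\ord(\rho(\gamma))$. This gives widths $2\cdot6=12$ over $\infty$, $2\cdot4=8$ over $0$, and $2\cdot2=4$ over $1$. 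In particular the cusp $\infty$ has width $12$, as claimed. The least common multiple of all cusp widths is $N=24$, which is the Wohlfahrt level.

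\emph{Noncongruence via Wohlfahrt.} If $\G_{\wW}$ were a congruence subgroup, Wohlfahrt's theorem would give $\G(24)\subset\G_{\wW}$. Since $\G_{\wW}$ is normal in $\G(2)$, this would make $\rho$ factor through $\G(2)/\G(24)$, which is a subgroup of $\SL_2(\Z/24\Z)\cong\SL_2(\Z/8\Z)\times\SL_2(\Z/3\Z)$. That group is solvable: $\SL_2(\Z/3\Z)$ is solvable and $\SL_2(\Z/8\Z)$ is a $2$-group extended by $\SL_2(\F_2)\cong S_3$. Therefore $S_5$ would be a quotient of a solvable group, which is a contradiction. (If one prefers to avoid the solvability argument, an alternative is to exhibit an element of $\G(24)$, such as $T^{24}$ combined with a suitable commutator, whose image under $\rho$ is nontrivial.)

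\emph{Main obstacle.} The delicate step is the identification of the stabilizer of the third cusp $1$ and its image, that is, the correct product relation for $\sigma_\infty$. However, the solvability argument needs only the fact that the level $N$ has prime factors $2$ and $3$ alone. So even a crude bound suffices: every width divides $2\cdot\lcm(6,4,2)=24$ by the element orders in $S_5$ of the three inertia classes. The conclusion is therefore robust.
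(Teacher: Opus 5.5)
Your argument is correct, but it takes a different route from the paper. The index count and the cusp widths $12,8,4$ over $\infty,0,1$ agree with the paper's. For noncongruence, however, the paper simply invokes Larcher's theorem on cusp amplitudes: the width set $\{4,8,12\}$ is not closed under $\lcm$, since $\lcm(8,12)=24$ is not a width.

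You instead use Wohlfahrt's theorem to get $\G(24)\subset\G_{\wW}$, and then observe that $S_5$ cannot be a quotient of $\G(2)/\G(24)\subset\SL_2(\Z/24\Z)$. Solvability works for this. Even more simply, $|\SL_2(\Z/24\Z)|=2^{10}3^2$ is prime to $5$, while $5\mid |S_5|$.

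The paper's route is a one-line citation, but it relies on a less standard result and on knowing the width set exactly. Yours needs only the standard Wohlfahrt theorem and elementary group theory, together with the fact that the widths are $\{2,3\}$-smooth, which already follows from the inertia orders $6,4,2$. It also shows why noncongruence holds: the monodromy group involves the prime $5$, while the cusp data sees only $2$ and $3$.

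The step you flagged as \emph{delicate} checks out. With right-to-left composition, $\sigma_0\sigma_1=(14)(23)=\sigma_\infty$. Moreover, $T^2\cdot ST^2S^{-1}=-\begin{pmatrix}3&-2\\2&-1\end{pmatrix}$ generates, up to sign, the stabilizer of the cusp $1$ in $\G(2)$, so that cusp has width $4$.

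One correction: $-I\in\G(2)$, since $-1\equiv 1 \pmod 2$. So $\G(2)$ is not free, and your remark that $-I\notin\G(2)$ is false. What is true is that $\G(2)=\{\pm I\}\times\langle T^2,ST^2S^{-1}\rangle$ with the second factor free. Hence $\rho$ is determined by the images of the two generators together with $\rho(-I)$. That value is forced to be $1$, because $-I$ is central and $S_5$ has trivial center (or because $-I$ acts trivially on $\HH$).

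Consequently $-I\in\G_{\wW}$. This is what you want when applying Wohlfahrt's theorem, since then there are no irregular cusps. The index $6\cdot 120=720$ is unaffected, and the paper is equally loose at this point.

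Also, \emph{transitive and contains a transposition} yields all of $S_5$ only because $5$ is prime; you should say so.
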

\begin{proof}Since $\overline{\G_{\wW}\bs \HH}\lra X(2)$ is a uniformization of 
$\wW\lra G\bs \wW\simeq \PP^1_\la$, 
the ramification indices of $\overline{\G_{\wW}\bs \HH}\lra X(2)$ 
over cusps $0,1,\infty$ are $4,2,6$ respectively.   
On the other hand, the ramification indices of the natural projection $X(2)\lra X(1)$ over $\infty$ 
is 2. 
Thus, the set of widths (or cusp amplitudes) of all cusps of $\G_{\wW}$ is given by 
$C:=\{8,4,12\}$. 
By \cite[Theorem 2]{Larcher}, we conclude that $\G_{\wW}$ is a non-congruence subgroup since 
$C$ is not closed under taking $\lcm$ 
(in fact, $\lcm(8,12)=24\not\in C$).

\end{proof}

\subsection{Puiseux expansions for $\vp(T)=\la(\tau)$}
Although the Puiseux expansions computed in this subsection will not be
used in the proof of the unbounded denominators conjecture, we include
them because they give explicit modular functions for
$\Gamma_{\wW}$. These expansions provide a complementary description
of the modular uniformization of the Wiman curve and may serve as a
starting point for studying further arithmetic properties of modular
functions for $\Gamma_{\wW}$.

We compute the Puiseux expansions of 
$$\vp(T)=\la(\tau)=16q_2-128q^2_2+704q^3_2-3072q^4_2+\cdots$$
around $\tau=\infty$, hence around $\la=\la(\infty)=0$. 
We refer to \cite[Section 8.3]{BK} for Puiseux expansions.  
Though the computation expects the unbounded denominators for  
modular functions with respect to $\G_{\wW}$, 
we leave the study of these functions for future work. 

The points $T=0$ and $T=1$ correspond to $\la=0$. 

We first compute the Puiseux expansions at $(T,\la)=(0,0)$. 
Put 
$$f(\la,T):=50000T^3(T-1)^2+\la(375T^2-50T-1)^2.$$
As for the Newton polygon of $f$ for $(\deg T,\deg \la)$, the first edge of the lower Newton polygon has slope $-1/3$,
and the remaining edge has slope $0$. Thus, we can expand $T$ as 
$$T=\sum_{n=1}^\infty a_n \la^{\frac{n}{3}}\in \C[[\la^{\frac{1}{3}}]]$$
where 
$$\la^{\frac{1}{3}}=16^{\frac{1}{3}}q_6 (1-8q_2+\cdots)^{\frac{1}{3}}
=16^{\frac{1}{3}}q_6\left(1-\frac{8}{3}q_2+\frac{68}{9}q^2_2-\frac{1408}{81}q^3_2
+\frac{8702}{243}q^4_2+O(q^5_2)\right).$$
Here we choose the principal value for the cubic root.  
Determining the coefficients inductively, we have three Puiseux expansions 
\[
\begin{aligned}
T=T_a={}&
a q_6
+34a^2q_6^2
-\frac{651}{3125}q_6^3
-\frac{6134a}{3125}q_6^4 
+\frac{56659a^2}{3125}q_6^5
-\frac{3178824}{9765625}q_6^6
-\frac{182087014a}{29296875}q_6^7
+O(q_6^8),
\end{aligned}
\]
where  $q_6=e^{2\pi i \tau/6}$ and  $a^3=-\frac1{3125}$.

Similarly, around $(T,\la)=(1,0)$, we have two Puiseux expansions
\[
\begin{aligned}
T=T_b={}&
1
+bq_4
-\frac{69336}{3125}q_4^2
-\frac{36426b}{3125}q_4^3
+\frac{1762580736}{9765625}q_4^4 \\
&\quad
+\frac{738223836b}{9765625}q_4^5
-\frac{30218397241248}{30517578125}q_4^6
-\frac{2227793391032b}{6103515625}q_4^7 \\
&\quad
+\frac{411870280304719872}{95367431640625}q_4^8
+O(q_4^9),
\end{aligned}
\]
where  $q_4=e^{2\pi i \tau/4}$ and  $b^2=-\frac{104976}{3125}$. 

Thus, we have five modular functions $T_a$ and $T_b$ on $X(\G_{\wW})$. 
We leave the study of these modular functions, which appear to have unbounded denominators, for future work.

\subsection{$q_{12}$-expansions of certain local coordinates on the Wiman sextic}
In this subsection, we use another method to compute 
the $q_{12}$-expansions of $x=X/Z$ and  $y=Y/Z$.  

From Subsection \ref{algBelyi}, recall the fixed point
$
  P_\tau=[1:1:i\sqrt3]\in W
$ of $\tau$ lying over $t=-5$ and over $\la=0$.  
Regarding the projective coordinates of $W$, put 
$$
  x=\frac{X}{Z},\quad
  y=\frac{Y}{Z}.
$$
Then $x(P_\tau)=y(P_\tau)=-\frac{i}{\sqrt3}$. 
We introduce
\begin{equation}\label{mdxy}
  m=\frac{x+y}{2},\quad
  d=\frac{x-y}{2}, 
\end{equation}
and we have $m(P_\tau)=-\frac{i}{\sqrt3}$ and $d(P_\tau)=0$. 
Put $m_1=\left(-\frac{i}{\sqrt{3}}\right)^{-1}m-1$. 
Thus, solving 
\begin{eqnarray}\label{m1d}
f(m+d,m-d,1)&=&
f(\left(-\frac{i}{\sqrt{3}}\right)(m_1+1)+d,\left(-\frac{i}{\sqrt{3}}\right)(m_1+1)-d,1)\nonumber\\ 
&=&
\frac{2}{9} (-87 d^4 m_1^2+29 d^2 m_1^4+116 d^2 m_1^3+120 d^2 m_1^2-174 d^4 m_1+8 d^2 m_1+
27 d^6 \nonumber \\
&&-132 d^4-16 d^2-m_1^6
 -6 m_1^5-20 m_1^4-40 m_1^3-48 m_1^2-32 m_1)=0
\end{eqnarray}
 locally at $(m_1,d)=(0,0)$, we have 
\begin{equation}\label{md}
\left(-\frac{i}{\sqrt{3}}\right)^{-1}m=1
-\frac{1}{2}d^2
-\frac{37}{8}d^4
-\frac{55}{16}d^6
+\frac{967}{128}d^8
+\frac{24095}{256}d^{10}+\cdots\in \Z\Big[\frac{1}{2}\Big][[d]].
\end{equation}
Here, we can check all coefficients belong to $\Z[\frac{1}{2}]$ by using 
Hensel's lemma (cf. \cite[Chapter VI.1, Lemma 1.2.]{Sil}). 
The $S_4$-invariant coordinate $t$ has the expansion
\begin{eqnarray}\label{td}
  t&=&B/A^2=\frac{x^2 y^2+x^2+y^2}{(x^2+y^2+1)^2}
  =\frac{(m+d)^2 (m-d)^2+(m+d)^2+(m-d)^2}{((m+d)^2+(m-d)^2+1)^2} \nonumber \\
  &=& \frac{(m^2-d^2)^2+2m^2+2d^2}{(2m^2+2d^2+1)^2} \nonumber \\
  &=&
  -5
+108d^2
-1188d^4
+9666d^6
-\frac{133677}{2}d^8
+\frac{3357261}{8}d^{10}
+\cdots \in \Z\left[\frac{1}{2}\right][[d]].
\end{eqnarray}
Using the relation between $t$ and $\lambda$ via $\vp$ with (\ref{td}), one obtains
\begin{eqnarray}\label{lamd}
 \lambda&=&\vp\left(\frac{-t-5}{16 (5 t-2)}\right)
  =\frac{3125 (t+5)^3 (3 t-1)^2}{(5 t-2) \left(75 t^2-1010 t-13\right)^2} \nonumber \\
&=&  5^5d^6\left(-\frac{1}{4}
-\frac{3}{8}d^2
-\frac{27}{64}d^{4}
-\frac{3537}{32}d^{6}
-\frac{330885}{1024}d^{8}+\cdots\right)\in 5^5 d^6 \Z\left[\frac{1}{2}\right][[d^2]].
\end{eqnarray}

Let $\kappa$ satisfy $\kappa^6=-\ds\frac{64}{3125}$.
Then, expanding $d$ in (\ref{lamd}) in terms of $q_{12}=e^{2\pi i \tau/12}$, we have  
\begin{equation}
  d(q_{12})
  =
  \kappa q_{12}
  -\frac{\kappa^3}{4}q_{12}^3
  +\frac{\kappa^5}{16}q_{12}^5
  +O(q_{12}^7) \in \Z\Big[\frac{1}{2},\kappa\Big][[q_{12}]].
\end{equation}
More precisely, we can also write $d(q_{12})$ as 
\begin{equation}\label{dq12}
d(q_{12})=\kappa q_{12}\sum_{n\ge 0}f_n  \kappa^{2n} q_{12}^{2n},\ f_n \in 
\Z\Big[\frac{1}{2}\Big],\ f_0=1.
\end{equation}
Substituting $d(q_{12})$ into (\ref{md}) and (\ref{mdxy}), we have 
\begin{align}
  x(q_{12})
  ={}&
  -\frac{i}{\sqrt3}
  +\kappa q_{12}
  +\frac{\sqrt3 i}{6}\kappa^2 q_{12}^2
  -\frac14\kappa^3 q_{12}^3
  +\frac{35\sqrt3 i}{24}\kappa^4 q_{12}^4
  + O(q_{12}^5)\in \O_K\Big[\frac{1}{2}\Big][[\kappa q_{12}]], \nonumber \\
  y(q_{12})
  ={}&
  -\frac{i}{\sqrt3}
  -\kappa q_{12}
  +\frac{\sqrt3 i}{6}\kappa^2 q_{12}^2
  +\frac14\kappa^3 q_{12}^3
  +\frac{35\sqrt3 i}{24}\kappa^4 q_{12}^4
  +O(q_{12}^5)\in \O_K\Big[\frac{1}{2}\Big][[\kappa q_{12}]], \nonumber 
\end{align}
where $K=\Q(\zeta_6)=\Q(\sqrt{-3})$.

\subsection{Holomorphic differentials}\label{holdiffe}
Let $F(x,y):=f(x,y,1)$.
Recall that $H^0(\wW,\Omega^1_{\wW})$ has 
a basis given by 
$$
  \omega_1
  =
  \frac{x(x^2-1)\,dx}{F_y(x,y)},\ 
  \omega_2
  =
  \frac{y(x^2-1)\,dx}{F_y(x,y)},\ 
  \omega_3
  =
  \frac{(x^2-1)\,dx}{F_y(x,y)},
$$
$$
  \omega_4
  =
  \frac{x(y^2-1)\,dx}{F_y(x,y)},\ 
  \omega_5
  =
  \frac{y(y^2-1)\,dx}{F_y(x,y)},\ 
  \omega_6
  =
  \frac{(y^2-1)\,dx}{F_y(x,y)}.
$$

Substituting the expansions $x=x(q_{12})$ and $y=y(q_{12})$, we write
\[
  \omega_j=A_j(q_{12})\,dq_{12}=\frac{\pi i}{6}q_{12} A_j(q_{12})\,d\tau\quad (1\le j\le 6).
\]
where 
\begin{align}
  A_1(q_{12})
  ={}&
  -\frac{\kappa}{8}
  -\frac{3\sqrt3 i}{16}\kappa^2 q_{12}
  +\frac14\kappa^3 q_{12}^2
  +\frac{3\sqrt3 i}{64}\kappa^4 q_{12}^3
  +\frac{35}{128}\kappa^5 q_{12}^4
  +\cdots,  \nonumber\\
  A_2(q_{12})
  ={}&
  -\frac{\kappa}{8}
  +\frac{\sqrt3 i}{16}\kappa^2 q_{12}
  -\frac18\kappa^3q_{12}^2
  -\frac{\sqrt3 i}{64}\kappa^4q_{12}^3
  +\frac{35}{128}\kappa^5q_{12}^4
  +\cdots,\nonumber\\
  A_3(q_{12})
  ={}&
  -\frac{\sqrt3 i}{8}\kappa
  +\frac{3}{16}\kappa^2 q_{12}
  +\frac{3}{64}\kappa^4q_{12}^3
  -\frac{35\sqrt3 i}{128}\kappa^5q_{12}^4
  +\cdots,\nonumber\\
  A_4(q_{12})
  ={}&
  -\frac{\kappa}{8}
  -\frac{\sqrt3 i}{16}\kappa^2 q_{12}
  -\frac18\kappa^3q_{12}^2
  +\frac{\sqrt3 i}{64}\kappa^4q_{12}^3
  +\frac{35}{128}\kappa^5q_{12}^4
  +\cdots,\nonumber\\
  A_5(q_{12})
  ={}&
  -\frac{\kappa}{8}
  +\frac{3\sqrt3 i}{16}\kappa^2 q_{12}
  +\frac14\kappa^3q_{12}^2
  -\frac{3\sqrt3 i}{64}\kappa^4q_{12}^3
  +\frac{35}{128}\kappa^5q_{12}^4
  +\cdots,\nonumber\\
  A_6(q_{12})
  ={}&
  -\frac{\sqrt3 i}{8}\kappa
  -\frac{3}{16}\kappa^2 q_{12}
  -\frac{3}{64}\kappa^4q_{12}^3
  -\frac{35\sqrt3 i}{128}\kappa^5q_{12}^4
  +\cdots, \nonumber
\end{align}
and they all belong to $\kappa \O_K\Big[\ds\frac{1}{2}\Big][[\kappa q_{12}]]$. 
Here, $\kappa$ satisfies $\kappa^6=-\frac{64}{3125}=-2^6\cdot 5^{-5}$ and $K=\Q(\zeta_6)=\Q(\sqrt{-3})$. 
Then, 
\begin{equation}
  g_j(\tau):=\frac{6}{\pi i}\frac{\omega_j}{d\tau}
  =q_{12}A_j(q_{12})\ (1\le j\le 6) 
\end{equation}
make up a basis of $S_2(\G_{\wW})\simeq H^0(X_{\G_{\wW}},\Omega^1_{X_{\G_{\wW}}})$. 
Summing up, we have proved the following result.
\begin{prop}\label{FCalg}
Keep the notation as above. Then $S_2(\G_{\wW})$ admits a basis whose 
Fourier expansions in $q_{12}$ belong to $\O_K\Big[\ds\frac{1}{2}\Big][[\kappa q_{12}]]$. 
\end{prop}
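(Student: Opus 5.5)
The plan is to show that each \(g_j=q_{12}A_j(q_{12})\) has a \(q_{12}\)-expansion lying in \(\O_K[\frac{1}{2}][[\kappa q_{12}]]\), and that the six \(g_j\) form a basis of \(S_2(\G_{\wW})\). The argument proceeds in three steps: integrality of the local coordinates, integrality of the differentials, and the basis claim.

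\textbf{Step 1: local coordinates.} By (\ref{lamd}) we have \(\la=5^5d^6u(d^2)\) with \(u\in\Z[\frac12][[d^2]]\) and \(u(0)=-\frac14\). On the modular side, \(\la=16q_2(1+\cdots)\in 16q_2\Z[[q_2]]\) and \(q_2=q_{12}^6\). Extracting sixth roots therefore gives
\[
d\,(1+4\textstyle\sum\cdots)^{1/6}=\kappa q_{12}(1-8q_2+\cdots)^{1/6},
\]
where \(\kappa^6=16\cdot(-4)/5^5\). The binomial series \((1+4w)^{1/6}\) and \((1-8q_2)^{1/6}\) have coefficients in \(\Z[\frac12,\frac13]\). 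I expect the actual denominators to be only powers of \(2\) after rescaling, because the relevant factors \(1+\cdots\) have their higher terms divisible by suitable powers of \(2\); one checks this by Hensel's lemma over \(\Z[\frac12]\), as was done for (\ref{md}).

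Inverting the series then gives (\ref{dq12}). The key point is that \(u\) depends only on \(d^2\), so \(d/(\kappa q_{12})\) becomes a power series in \(\kappa^2q_{12}^2\). Here we use \(\kappa^6\in 2^6 5^{-5}\Z\) to absorb the \(q_2=q_{12}^6\) terms: we write \(q_{12}^6=\kappa^{-6}(\kappa q_{12})^6\), and \(\kappa^{-6}=-5^5/2^6\in\Z[\frac12]\). Substituting into (\ref{md}), we get \(m\) as \(-\frac{i}{\sqrt3}\) times a series in \(\Z[\frac12][[d^2]]\). Consequently \(x=m+d\) and \(y=m-d\) lie in \(\O_K[\frac12][[\kappa q_{12}]]\), with \(\frac{1}{\sqrt3}=-\frac{\sqrt{-3}}{3}\) as the only possibly problematic constant.

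\textbf{Step 2: differentials.} We write \(\omega_j=P_j(x,y)\,dx/F_y\), where \(P_j\in\Z[x,y]\). We have \(dx=x'(q_{12})\,dq_{12}\), and \(x'\in\kappa\,\O_K[\frac12][[\kappa q_{12}]]\). The main obstacle is controlling \(1/F_y\) at \(P_\tau\). Since \(P_\tau\) is a smooth point that is ramified for \(d\), \(F_y(P_\tau)\) might vanish. The plan is to rewrite in terms of the uniformizer: \(dx/F_y=-dy/F_x\), and in the \((m_1,d)\)-coordinates we have \(\omega=P\,dd/(\partial F/\partial m_1)\). From (\ref{m1d}), \(\partial_{m_1}\) at the origin equals \(\frac29\cdot(-32)\), a unit times \(2^6/9\) in \(\O_K[\frac12]\) (noting that \(3=-(\sqrt{-3})^2\)). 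Hence the inverse of \(\partial F/\partial m_1\) is a power series with coefficients in \(\O_K[\frac12]\), using a geometric series argument over \(\Z[\frac12]\) and \(\O_K\).

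Dividing by \(3\) is the delicate point. I would handle it by checking that all factors of \(\sqrt{-3}\) cancel against the \(\frac{i}{\sqrt3}\) coming from the change of variables \(x=-\frac{i}{\sqrt3}(m_1+1)+d\), which the displayed leading terms \(A_j\) corroborate. Together with \(dd=(\kappa+\cdots)\,dq_{12}\), this gives \(A_j\in\kappa\,\O_K[\frac12][[\kappa q_{12}]]\). It follows that \(g_j=q_{12}A_j=(\kappa q_{12})\cdot(A_j/\kappa)\) lies in \(\O_K[\frac12][[\kappa q_{12}]]\).

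\textbf{Step 3: basis.} The six \(g_j\) form a basis of \(S_2(\G_{\wW})\) for two reasons. First, \(\omega_j\mapsto \frac{6}{\pi i}\omega_j/d\tau\) is the standard isomorphism \(H^0(X_{\G_{\wW}},\Omega^1)\simeq S_2(\G_{\wW})\) under the uniformization \(X_{\wW}\simeq\wW(\C)\). Second, the \(\omega_j\) form a basis of \(H^0(\wW,\Omega^1_{\wW})\) by Proposition \ref{diff-can}(1).
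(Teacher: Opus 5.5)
\textbf{Gap at the prime $3$ in Step~1.} Your overall route is the paper's: expand $d$ in $q_{12}$, feed it into (\ref{md}) to get $x,y$, substitute into the $\omega_j$, and identify $\omega_j\mapsto g_j$ with $S_2(\G_{\wW})$. The gap is in Step~1, at the prime $3$. It lies exactly where the content of the proposition is, namely $f_n\in\Z[\frac12]$ in (\ref{dq12}).

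With $\la=-\frac{5^5}{4}d^6\Phi(d^2)$ and $\la=16q_2L(q_2)$, you have $d^6\Phi(d^2)=Q^6L(\kappa^{-6}Q^6)$ with $Q=\kappa q_{12}$. Solving for $d$ is a sixth-root extraction. Writing $d=Qh$, Hensel/Newton for $h^6\Phi(Q^2h^2)=L(\kappa^{-6}Q^6)$ divides by the derivative $6$, which is not a unit in $\Z[\frac12]$. For (\ref{md}) the relevant derivative was $-32$, which is why Hensel worked there. Divisibility of higher terms by powers of $2$ is irrelevant at $p=3$, and the normalization $Q=\kappa q_{12}$ is fixed by the statement, so ``rescaling'' is not available.

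The $3$-adic denominators are really present:
\begin{itemize}
\item $L(\kappa^{-6}Q^6)^{1/6}=1+\frac{5^5}{48}Q^6+\cdots$;
\item the inverse of $d\mapsto d\,\Phi(d^2)^{1/6}$ is $s-\frac14s^3+\frac1{16}s^5-\frac{881}{12}s^7+\cdots$.
\end{itemize}
So the $Q^7$-coefficient of $d$ is $-\frac{881}{12}+\frac{5^5}{48}=-\frac{133}{16}$. It lies in $\Z[\frac12]$ only because the two $3$-adic denominators cancel, and nothing in your argument controls this cancellation for all $n$.

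As written, Step~1 proves only $f_n\in\Z[\frac16]$, hence the proposition with $\O_K[\frac16]$ in place of $\O_K[\frac12]$. (That weaker version is all the mod-$5$ argument of Section~\ref{unbounded-dc} actually uses.) To get the stated version you must either take (\ref{dq12}) as an input, as the paper does (it records it from the explicit expansion rather than deriving it), or give a genuine reason for the $3$-adic cancellation.

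\textbf{A smaller point in Steps~1--2.} The claim that $x,y$ lie in $\O_K[\frac12][[\kappa q_{12}]]$ is false. Their constant term is $c:=-\frac{i}{\sqrt3}=-\frac{\sqrt{-3}}{3}$, and $\ord_{\sqrt{-3}}(c)=-1$. Your formula for $\omega$ also drops a factor $2c$. Let $G(m_1,d)$ be the left side of (\ref{m1d}). The correct identity is $dx/F_y=2c\,dd/G_{m_1}$. Along the branch, $1/G_{m_1}\in-\frac{9}{64}\Z[\frac12][[d]]$, because the integral bracket in (\ref{m1d}) has $m_1$-coefficient $-32$ and $m_1\in d^2\Z[\frac12][[d^2]]$.

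Now $x$ and $y$ are linear in $c$ over $\Z[\frac12][[d]]$, and each numerator of $\omega_j$ has degree at most $3$. Hence $\omega_j=\sum_{k=0}^{3}9c^{k+1}r_{jk}(d)\,dd$ with $r_{jk}\in\Z[\frac12][[d]]$. Since $\ord_{\sqrt{-3}}(9c^{k+1})=3-k\ge0$, your ``delicate point'' is settled by this valuation count rather than by inspecting the displayed leading terms. Step~3 is fine.
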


\section{The unbounded denominators conjecture for $\G_{\wW}$}\label{unbounded-dc}
In this section, we prove the following claim:
\begin{thm}\label{udc} 
Every nonzero cusp form in $S_2(\G_{\wW})$ whose Fourier expansion 
in $q_{12}$ has coefficients in $\bQ$ has unbounded denominators.
\end{thm}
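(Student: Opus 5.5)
The plan is to argue $\mathfrak p$-adically at a prime $\mathfrak p$ above $5$, using the shape of the basis in Proposition \ref{FCalg} together with the reduction of the plane model modulo $\mathfrak p$. Put $u:=\kappa q_{12}$. By Proposition \ref{FCalg} and the computations of Subsection \ref{holdiffe}, each $g_j=q_{12}A_j(q_{12})$ can be written as $g_j=h_j(u)$ with $h_j\in\O_K[\frac12][[u]]$, because $A_j\in\kappa\O_K[\frac12][[u]]$. Since $S_2(\G_{\wW})$ has a basis with algebraic coefficients, a form $g$ with $q_{12}$-coefficients in $\bQ$ is $\sum c_j g_j$ with $c_j\in\bQ$. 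Choose a number field $L\supset K(\kappa,c_j)$ and a prime $\mathfrak p\mid 5$ of $L$. After rescaling $g$ by a constant, $H(u):=\sum c_jh_j(u)$ lies in $\O_{L,\mathfrak p}[[u]]$ and has some $\mathfrak p$-unit coefficient; note $H\neq0$. Write $H=\sum b_nu^n$. The $q_{12}^n$-coefficient of $g$ is then $b_n\kappa^n$, and $v_{\mathfrak p}(\kappa)=-\frac56 v_{\mathfrak p}(5)<0$. If $g$ had bounded denominators, we would get $v_{\mathfrak p}(b_n)\ge \frac56 n\,v_{\mathfrak p}(5)-C$, so $b_n\equiv0 \pmod{\mathfrak p}$ for all large $n$. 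Hence the reduction $\overline H\in\F[u]$, with $\F$ the residue field, would be a \emph{nonzero polynomial}. The aim is to show that this is impossible.

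Next I make $u$ algebraic modulo $\mathfrak p$. By (\ref{dq12}), $d=u\sum f_nu^{2n}$ with $f_n\in\Z[\frac12]$, so $d$ and $u$ are mutually inverse-expressible power series over $\Z[\frac12]$. On the other side, (\ref{lamd}) gives $\lambda/5^5=d^6G(d^2)$ with $G\in\Z[\frac12][[d^2]]$ and $G(0)=-\frac14$. Using $\lambda=16q_2-128q_2^2+\cdots$ with integral coefficients and $q_2=q_{12}^6=u^6\kappa^{-6}=-5^5u^6/64$, we get $\lambda/5^5=-\frac14u^6(1+5^5(\cdots))$. Reducing mod $\mathfrak p$ yields $\bar u^6=-4\,\bar d^6\,\overline G(\bar d^2)$. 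The right-hand side is the reduction of $-4\cdot 5^{-5}\beta$, which is a rational function on the reduced plane curve $\overline W$ by Theorem \ref{mainclaim}, since $5^{-5}\cdot 3125=1$. So $\bar u$ is a rational function on a cyclic cover $C'$ of degree dividing $6$ of a component of the normalization of $\overline W$ through the reduction of $P_\tau$. In particular $\bar u$ is a nonconstant function on a projective curve and has poles.

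Finally, I reduce the differentials. The expansions of $x,y$ at $P_\tau$ lie in $\O_K[\frac12][[d]]$ by (\ref{md}), so $\overline{\sum c_j\omega_j}=\overline H(\bar u)\,\frac{d\bar u}{?}$ up to the harmless constant $6/\pi i$. More precisely, the identity $\sum c_j\omega_j=H(u)\,du/u\cdot(\text{unit})$ reduces to $\bar\omega:=\sum\bar c_j\bar\omega_j=\overline H(\bar u)\,\bar u^{-1}d\bar u$ as formal differentials at the reduced point. Both sides are then rational differentials on $C'$, and they agree there. Here $d\bar u\neq0$, because $\bar u=\bar d(1+\cdots)$ is separable.

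At a pole $Q$ of $\bar u$ with ramification index $e$, the differential $\overline H(\bar u)\bar u^{-1}d\bar u$ has order $-e(\deg\overline H)-1<0$ (taking the $du/u$ normalization; if instead the $du$ normalization is the correct one, the order is $-e(\deg\overline H+2)+e-1<0$ as well). On the other hand, $\bar\omega$ is built from $\bar\omega_j=\overline{(\cdots)}\,dx/\overline F_y$, which are regular on the normalization of $\overline W$ away from finitely many explicitly controllable points. It is therefore also regular on $C'$ wherever the cover is unramified.

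The main obstacle is exactly this last comparison. I would analyze the degeneration of $W$ over $\F_5$ explicitly: determine its singularities and components, check that the reduced $\bar\omega_j$ remain regular (adjoint) differentials there, and locate the poles of $\bar u$, namely the zeros of $(5t-2)(75t^2-1010t-13)^2$ reduced mod $5$, where $\bar u^6$ has poles. The goal is to exhibit one pole of $\bar u$ at which $\bar\omega$ is regular. That contradiction shows $\overline H$ cannot be a polynomial, so $g$ has unbounded denominators.
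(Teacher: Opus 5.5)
Your reduction is sound as far as it goes. Bounded denominators would force $\overline H$ to be a nonzero polynomial with $\overline H(0)=0$, and $\bar u$ is algebraic over the function field of the normalization $\PP^1_z$ of $W_{\F_5}$ via $\bar u^6=P(z)$; this is exactly the paper's (\ref{Tz}). The genuine gap is the final step, which you leave open, and the strategy you sketch for it cannot work.

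\textbf{Where the proposed endgame fails.}

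First, the poles of $\bar u$ are misidentified. Modulo $5$, $(5t-2)(75t^2-1010t-13)^2\equiv 2$ is a nonzero constant, so $-4\cdot 5^{-5}\beta\equiv 3t^3(3t-1)^2$ is a polynomial in $t$. Consistently with $P(z)=2p(z)^6/(z^5-z)^{20}$, the poles of $\bar u$ lie exactly over the six points of $\PP^1(\F_5)\subset\PP^1_z$.

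Second, these are precisely the points where the reduced differentials have their poles. The reductions $M_i(z)\,dz$ of the $\eta_i$ have denominator $(z^5-z)^2$, and on $\PP^1$ every nonzero $\bar\omega$ must have a pole somewhere. For instance, $N_0$ has degree $10$ and does not vanish at any $z\in\F_5$. So the reduction of $\eta_0$ has double poles at all six points, and for the corresponding cusp form there is no pole of $\bar u$ at which $\bar\omega$ is regular.

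Third, your local order count fails in characteristic $5$. On the relevant component $\bar u^3=c\,p(z)^3/(z^5-z)^{10}$ ($c^2=2$) of your cover, $\bar u$ has pole order $10$, which is divisible by $5$. In fact $d\bar u/\bar u=dp/p$ is regular at every pole of $\bar u$, so the order there is not $-e\deg\overline H-1$.

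\textbf{What is needed instead.}

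The missing step requires a global comparison rather than a local pole argument. The paper decomposes $\omega$ into $\sigma_0^*$-eigencomponents, using Lemma \ref{cyclic} and choosing $i$ with $\overline G(\zeta_6^i)\neq 0$. Each component then has expansion $Q^iG_i(Q^6)\,dQ/Q$, so its sixth power lives in $\F_{25}(z)$. Comparing $\deg_z R_i=30$ with $\deg_z P=120$ gives the impossible equation $180=120(i+6\deg G_i)$.

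Your setup can be closed in the same spirit. Both $\bar\omega=M(z)\,dz$ and $d\bar u/\bar u=p'(z)\,dz/p(z)$ come from $\PP^1_z$, so $\overline H(\bar u)=Mp/p'\in\F_{25}(z)$. Since $1,\bar u,\bar u^2$ is a basis of $\F_{25}(z,\bar u)$ over $\F_{25}(z)$, this forces $\overline H(u)=K(u^3)$ with $\deg_z\overline H(\bar u)=60\deg K$. On the other hand, $\deg_z(Mp/p')\le 30$, because $M=N/(z^5-z)^2$ with $\deg N\le 10$ and $\deg p'\le 18$. Hence $\overline H$ is constant, and therefore zero, a contradiction.

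Either way, the explicit mod-$5$ shape of the differentials is the essential input, and your proposal does not supply it.
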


\subsection{Preliminaries}\label{pre} 
Recall $\omega_1,\ldots,\omega_6$ from Subsection \ref{holdiffe}. 
Put
\[
 V_{\Q}:=H^0(\wW,\Omega_{\wW}^1)
 =\bigoplus_{j=1}^6\Q\omega_j,
 \quad
 V_{\overline{\Q}}
 :=V_{\Q}\otimes_{\Q}\overline{\Q}.
\]

Let \(\sigma_0=(132)(45)\) which acts on $W$ on the left.   
A direct computation gives
\[
  \sigma_0=(14)\alpha(142) 
\]
where the two elements of \(S_4\) occurring here act by
\[
  h_1=(14):[X:Y:Z]\mapsto [Y:X:-Z],
  \quad
  h_2=(142):[X:Y:Z]\mapsto [Y:Z:-X]
\]
(see Section \ref{intro} for the description of $\Aut(\wW)=\Bir(W)\simeq S_5$). 

Write
\[
  \boldsymbol\omega
  ={}^t (\omega_1,\ldots,\omega_6),
  \quad
  \sigma^*\boldsymbol\omega=M_\sigma\boldsymbol\omega,\ \sigma\in S_5 
\]
By direct computation, we have 
\[
{\small
  M_{h_1}
  =
  \begin{pmatrix}
    0&0&0&0&1&0\\
    0&0&0&1&0&0\\
    0&0&0&0&0&-1\\
    0&1&0&0&0&0\\
    1&0&0&0&0&0\\
    0&0&-1&0&0&0
  \end{pmatrix},\ 
  M_\alpha
  =
  \frac12
  \begin{pmatrix}
  -1&1&0&-1&0&-1\\
  -1&1&0&1&0&1\\
  1&1&2&-1&0&-1\\
  -1&0&1&0&-1&-1\\
  -1&0&1&2&1&-1\\
  1&2&1&0&-1&-1
  \end{pmatrix},\ 
  M_{h_2}
  =
  \begin{pmatrix}
    0&1&0&0&-1&0\\
    0&0&1&0&0&-1\\
    -1&0&0&1&0&0\\
    0&1&0&0&0&0\\
    0&0&1&0&0&0\\
    -1&0&0&0&0&0
  \end{pmatrix}.
  }
\]
Thus, we have 
\[
  M_{\sigma_0}=M_{h_1}M_\alpha M_{h_2}
  =
  \frac12
  \begin{pmatrix}
  0&1&1&1&1&0\\
  0&-1&-1&1&1&0\\
  0&-1&-1&-1&1&2\\
  -1&0&1&0&1&-1\\
  1&-2&1&0&1&-1\\
  1&0&-1&-2&1&1
  \end{pmatrix}.
\]
In particular, $M_{\sigma_0}^6=I_6$ and 
$ \det(YI_6-M_{\sigma_0})=Y^6-1$.

\begin{lem}\label{cyclic} Keep the notation as above. 
Then, $\omega_2$ is a cyclic vector of $V_{\bQ}$ for $M_{\sigma_0}$. 
Namely, $V_{\overline{\Q}}=\overline{\Q}[\sigma^*_0]\omega_2$.
\end{lem}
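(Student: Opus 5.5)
Since $\dim V_{\bQ}=6$ and the characteristic polynomial of $M_{\sigma_0}$ is $Y^6-1$, which has six distinct roots in $\bQ$, the operator $\sigma_0^*$ is diagonalizable on $V_{\bQ}$. Each eigenvalue is a $6$th root of unity $\zeta^k$ ($0\le k\le 5$, $\zeta=e^{2\pi i/6}$), and each eigenspace is one-dimensional. So $\sigma_0^*$ is a regular semisimple operator, and a vector is cyclic exactly when all six of its eigencomponents are nonzero. The proof therefore reduces to one linear-algebra check on the specific vector $\omega_2$.

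To carry this out concretely, I will form the Krylov matrix $K$ whose rows are the coordinate vectors of
\[
\omega_2,\ \sigma_0^*\omega_2,\ (\sigma_0^*)^2\omega_2,\ \ldots,\ (\sigma_0^*)^5\omega_2
\]
with respect to the basis $\omega_1,\ldots,\omega_6$. I need to fix the convention carefully: with $\sigma^*\boldsymbol\omega=M_\sigma\boldsymbol\omega$, the coefficients of $\sigma_0^*\omega_2$ are read off from the second row of $M_{\sigma_0}$. Iterating, the coefficient row of $(\sigma_0^*)^k\omega_2$ is $e_2^{\,t}M_{\sigma_0}^k$. The statement $V_{\bQ}=\bQ[\sigma_0^*]\omega_2$ is then equivalent to $\det K\neq 0$, where the rows of $K$ are $e_2^{\,t}M_{\sigma_0}^k$ for $0\le k\le 5$. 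The entries are in $\frac{1}{2^k}\Z$, so this is a finite exact rational computation.

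As an alternative that is more conceptual and acts as a cross-check, I will compute the eigenvector decomposition. For each $k$, let $P_k=\frac16\sum_{j=0}^5\zeta^{-jk}M_{\sigma_0}^j$ be the spectral projector onto the $\zeta^k$-eigenspace, which works because $M_{\sigma_0}^6=I_6$. Cyclicity is then equivalent to $e_2^{\,t}P_k\neq 0$ for every $k$. Equivalently, for each $k$ the vector $\sum_j \zeta^{-jk}e_2^{\,t}M_{\sigma_0}^j$ must be nonzero, which can be checked by looking at a single nonzero coordinate.

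The only real obstacle is computational: getting the product $M_{h_1}M_\alpha M_{h_2}$ and its powers right, and keeping the left/right action conventions consistent. Even so, the conclusion $\det K\ne0$ does not depend on whether one uses rows or columns, that is, on $M$ versus $M^t$. A vector $\omega_2$ is cyclic for $\sigma_0^*$ under either convention provided its eigencomponents are nonzero, and the row computation above captures exactly that. I would first compute $M_{\sigma_0}^2$ and $M_{\sigma_0}^3$ explicitly. Since $(M_{\sigma_0}^3)^2=I$, this splits $V$ into the $\pm1$ eigenspaces of $\sigma_0^{*3}$, each three-dimensional. On each of these, the eigenvalues of $M_{\sigma_0}^2$ are the three cube roots of unity, so it suffices to check that the two components $\frac12(e_2^{\,t}\pm e_2^{\,t}M_{\sigma_0}^3)$ are each cyclic for $M_{\sigma_0}^2$ on their $3$-dimensional pieces. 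That amounts to two $3\times3$ determinants, which is a manageable hand or computer-algebra check.
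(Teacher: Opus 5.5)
\textbf{Verdict: the proposal has a genuine gap, and carrying out your own check shows the statement in the form you (correctly) translated is false.}

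Your translation of the lemma is correct. Since $\sigma^*\omega_i=\sum_j (M_\sigma)_{ij}\omega_j$, the coordinate row of $(\sigma_0^*)^k\omega_2$ is $e_2^{t}M_{\sigma_0}^k$. Hence $V_{\bQ}=\bQ[\sigma_0^*]\omega_2$ is equivalent to $\det K\neq0$ for your row Krylov matrix, and equally to $e_2^{t}P_k\neq 0$ for all six spectral projectors.

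The gap is your claim that rows versus columns ($M$ versus ${}^tM$) does not matter. For a matrix with distinct eigenvalues these are different conditions:
\begin{itemize}
\item the row vector $e_2^{t}$ is cyclic under right multiplication iff every right eigenvector ($Mv=\lambda v$) has nonzero second coordinate;
\item $\det(e_2,Me_2,\dots,M^5e_2)\neq0$ iff every left eigenvector has nonzero second coordinate.
\end{itemize}
Already for $M=\begin{pmatrix}1&1\\0&2\end{pmatrix}$, $e_1$ is an eigenvector of $M$, yet $e_1^{t}$ is cyclic under right multiplication.

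Here the distinction is decisive, and the finite check you defer comes out the wrong way. Put $N=2M_{\sigma_0}$. The rows $e_2^{t}N^k$ for $k=0,\dots,5$ are
\begin{gather*}
(0,1,0,0,0,0),\quad (0,-1,-1,1,1,0),\quad (0,0,4,0,0,-4),\\
(-4,-4,0,4,0,4),\quad (0,0,0,-16,0,0),\quad (16,0,-16,0,-16,16).
\end{gather*}
These satisfy $\sum_{k=0}^{5}2^{-k}e_2^{t}N^k=0$, that is, $\sum_{k=0}^{5}(\sigma_0^*)^k\omega_2=0$. Equivalently, $v={}^t(1,0,1,0,1,1)$ satisfies $M_{\sigma_0}v=v$ and has second coordinate $0$. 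So $\omega_2$ has no component on the $\sigma_0^*$-fixed line, which is spanned by $2\omega_1-\omega_2+\omega_3-\omega_4+2\omega_5+\omega_6$. Therefore $\dim\bQ[\sigma_0^*]\omega_2\le 5$ and $\det K=0$.

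Your $3\times 3$ cross-check would have exposed this. The component $w=\frac12\bigl(e_2^{t}+e_2^{t}M_{\sigma_0}^3\bigr)$ satisfies $w\bigl(1+M_{\sigma_0}^2+M_{\sigma_0}^4\bigr)=0$, so the determinant on the $+1$ eigenspace of $M_{\sigma_0}^3$ vanishes.

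The paper uses the same Krylov idea but computes $\det(\omega_2,M_{\sigma_0}\omega_2,\dots,M_{\sigma_0}^5\omega_2)$ with $\omega_2$ treated as the column $e_2$. That column Krylov determinant does equal $\frac74$. It certifies cyclicity of $e_2$ for $M_{\sigma_0}$ acting on columns, i.e.\ for ${}^tM_{\sigma_0}$ in your convention, not the identity $V_{\bQ}=\bQ[\sigma_0^*]\omega_2$.

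This is not a misprint in the matrices, as I checked directly:
\begin{itemize}
\item $M_{h_1}$ and $M_{h_2}$ follow from the residue formula.
\item Let $P_1=X(X^2-Z^2),\dots,P_6=Z(Y^2-Z^2)$ be the adjoint cubics of $\omega_1,\dots,\omega_6$. Then the rows of $2M_\alpha$ are exactly the coefficient vectors of $P_i(g_0,g_1,g_2)/\bigl(-4(X-Z)(X-Y)(Y+Z)\bigr)$, up to the common scalar.
\end{itemize}

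So your plan, carried out, refutes the ``Namely'' form of the lemma rather than proving it. In the notation of Section~5 it gives $\eta_0=0$. A correct statement would need either the transposed action or a different vector with nonzero component on all six eigenlines; $\omega_4$ fails for the same reason as $\omega_2$.
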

\begin{proof}
The claim follows from 
\[
 \det\bigl(
 \omega_2,M_{\sigma_0}\omega_2,\ldots,M_{\sigma_0}^5\omega_2
 \bigr)=\frac74\neq0.
\]
\end{proof}

\subsection{A rational parametrizaion of the Wiman curve over $\F_5$}\label{rat}
As explained in \cite[Remark 3.1]{GKLM}, $W_{\F_5}$ is a rational curve.  
For later use, we need to describe a rational parametrization of $W_{\F_5}$. 
We verify the rationality of \(W_{\F_5}\) and compute its
parametrization using Magma (\cite{Bosma}). 

Let $z$ be a (local) parameter of $\PP^1_{\F_5}$.  
Then, 
\begin{equation}\label{xypara}
 x=\frac{(z^2+1)^3}{z^3},
 \quad
 y=\frac{2(z^2-1)^3}{z^3}.
\end{equation}
gives a birational map $\PP^1_{\F_5}\lra W_{\F_5}$
with the converse 
$$\frac{x^4 - 2 x^3 y - x^2 y^2 + 2 x y^3 + y^4 - 1}{x^5 + x^4 y - 2 x^3 y^2 - 2 x^3 - x^2 y^3 + 2 x^2 y + 2 x y^4 - 
 x y^2 + 2 x - 2 y^5 - y^3 + y}.$$
Thus, we have 
$\F_5(W)=\F_5(z)$. 

Here is a Magma code:
\lstdefinelanguage{Magma}{
  morekeywords={
    GF,ProjectiveSpace,Curve,IsIrreducible,Genus,
    IsSingular,IsAnalyticallyIrreducible,Places,Degree,
    Parametrization,DefiningIdeal,Image,
    DefiningEquations
  },
  sensitive=true,
  morecomment=[l]{//},
  morestring=[b]"
}

\lstset{
  basicstyle=\ttfamily\small,
  keywordstyle=\color{blue},
  commentstyle=\color{gray},
  numbers=left,
  numberstyle=\tiny,
  stepnumber=1,
  numbersep=8pt,
  breaklines=true,
  columns=fullflexible,
  frame=single,
  showstringspaces=false
}

\begin{lstlisting}[language=Magma,
 caption={Computation of a rational parametrization of \(W_{\F_5}\).},
  label={code:wiman-parametrization},
  captionpos=t,
  belowcaptionskip=12pt]
k := GF(5);

P2<X,Y,Z> := ProjectiveSpace(k,2);

f :=    X^6 + Y^6 + Z^6  + (X^2 + Y^2 + Z^2)*(X^4 + Y^4 + Z^4)  - 12*X^2*Y^2*Z^2;

C := Curve(P2,f);

IsIrreducible(C);
Genus(C);

p := C![1,2,0];

IsSingular(C,p);
IsAnalyticallyIrreducible(C,p);

pls := Places(p);
[ Degree(pl) : pl in pls ];

pl := pls[1];

P1amb<s,t> := ProjectiveSpace(k,1);
P1 := Curve(P1amb);

prm := Parametrization(C,pl,P1);
prm;

DefiningIdeal(Image(prm)) eq DefiningIdeal(C);
\end{lstlisting}

\subsection{Expansion of the cycles}
Fix an embedding
$
 \iota_5:\overline{\Q}\hookrightarrow\overline{\Q}_5
$
compatible with the choices of a primitive sixth root of unity $\zeta_6$
and a sixth root $\kappa$ satisfying $\kappa^6=-\frac{64}{3125}$. Recall $K=\Q(\zeta_6)$. 
The embedding $\iota_5$ together with the reduction map induces $\O_K[\frac{1}{2}]\hookrightarrow \bZ_5\rightarrow \bF_5$. 
We denote it by $\overline{\iota}_5$.  

Put 
\[
 \omega_2=J_2(Q)\frac{dQ}{Q},
 \quad
 J_2(Q)=\sum_{n\geq0}a_nQ^n \in \O_K[\frac{1}{2}][[Q]],\ Q:=\kappa q_{12}
\]
and for $0\le i\le 5$, 
\[
 \eta_i
 :=\frac16\sum_{k=0}^5
 \zeta_6^{-ik}(\sigma^*_0)^k\omega_2=\left(\sum_{m\geq0}a_{6m+i}Q^{6m+i}\right)\frac{dQ}{Q}
\]

We can rewrite (\ref{lamd}) as 
$$\la=-\frac{5^5}{4}d^6\Phi(d^2)$$ with 
$\Phi\in 1+T'\Z[\frac{1}{2}][[T']]$.
Recall 
$t=\ds\frac{x^2 y^2+x^2+y^2}{(x^2+y^2+1)^2}$ (see (\ref{td})). 
Using $\vp$ and (\ref{xypara}), we have 
\begin{equation}\label{eq1}
d^6\Phi(d^2)=-\frac{4}{5^5}\la=-\frac{4}{5^5}\vp\Big(\frac{-t-5}{16(5t-2)}\Big)
\equiv 
 \frac{2 p(z)^6}
 {z^{20}(z-2)^{20}(z-1)^{20}(z+1)^{20}(z+2)^{20}} \quad ({\rm mod}\ 5)
\end{equation}
where 
$$p(z)=\left(z^2+2\right) \left(z^2+3\right) \left(z^2+z+1\right) \left(z^2+z+2\right) 
\left(z^2+2 z+3\right) \left(z^2+2 z+4\right)\times $$
$$ \left(z^2+3 z+3\right) \left(z^2+3 z+4\right) \left(z^2+4 z+1\right) \left(z^2+4 z+2\right)$$
is a separable polynomial over $\F_5$ of $\deg(p)=20$. 

Since $q_2=q_{12}^6=\frac{Q^6}{\kappa^6}=-\frac{3125}{64}Q^6$, if we write 
$\la(q_2)=16 q_2 L(q_2)$ with $L(q_2)\in 1+q_2\Z[[q_2]]$, then we have 
$$-\frac{5^5}{4}d^6\Phi(d^2)=\la=16 q_2 L(q_2)=-\frac{3125}{4}Q^6 L(-\frac{3125}{4}Q^6).$$
Thus, $d^6\Phi(d^2)=Q^6 L(-\frac{3125}{4}Q^6)\equiv Q^6$ mod $5$ as elements of 
$\F_5[[d,Q]]$. 
Plugging this into (\ref{eq1}), we have 
\begin{equation}\label{Tz}
Q^6\equiv  \frac{2 p(z)^6}
 {z^{20}(z-2)^{20}(z-1)^{20}(z+1)^{20}(z+2)^{20}}=:P(z) \quad ({\rm mod}\ 5).
 \end{equation} 
Using (\ref{xypara}), 
we write
\[
 \eta_i\equiv M_i(z)\,dz\quad ({\rm mod}\ 5), \quad (0\le i\le 5).
\]
Explicitly, if we put $\zeta=\overline{\iota}_5(e^{2\pi i/6})$, then 
$$M_i(z)=\frac{N_i(z)}{D(z)},$$
where
\[
  D(z)
  =
  z^2(z-1)^2(z+1)^2(z-2)^2(z+2)^2
\]
and
\[{\small
\begin{aligned}
N_0(z)
={}&
-2z^{10}-z^8+z^6-z^5+2z^4+2z^2+1,
\\
N_1(z)
={}&
\zeta\bigl(
z^{10}+2z^8-2z^7+z^6-z^5+z^4
-2z^3+2z^2-2
\bigr)
+z^{10}+z^8-2z^7+2z^6+z^5
-2z^4-2z^3-z^2-2,
\\
N_2(z)
={}&
\zeta\bigl(z^{10}+2z^7+2z^3-2\bigr)
-z^7-z^3+2,
\\
N_3(z)
={}&
-z^{10}+z^7+z^3+2,
\\
N_4(z)
={}&
\zeta\bigl(-z^{10}-2z^7-2z^3+2\bigr)
+z^{10}+z^7+z^3,
\\
N_5(z)
={}&
\zeta\bigl(
-z^{10}-2z^8+2z^7-z^6+z^5-z^4
+2z^3-2z^2+2
\bigr)
+2z^{10}-2z^8+z^7-2z^6-z^4+z^3+z^2+1.
\end{aligned}
}
\]
Choose a root $\xi\in \F=\F_{25}$ of $p(z)$ and solving (\ref{Tz}) implicitly 
around $(Q,z)=(0,\xi)$, we can formally write $z=z(Q)=\xi+c_1Q+\cdots\in \F[[Q]]$. 
Taking the logarithmic derivative of (\ref{Tz}) in $Q$, 
we have 
\[
 \frac{dQ}{Q}=z'(Q)\frac{P'(z(Q))}{P(z(Q))}dQ
\]
Consequently,
$$ \eta_i\equiv M_i(z)\,dz=M_i(z(Q))z'(Q)dQ=R_i(z)\frac{dQ}{Q}\quad ({\rm mod}\ 5),\ 
R_i(z):=\frac{P(z)M_i(z)}{P'(z)},\quad (0\le i\le 5). $$
Thus,
\[
 \sum_{m\geq0}\overline a_{6m+i}Q^{6m+i}=R_i(z(Q)),\ \overline a_{6m+i}:=
 \overline{\iota}_5(a_{6m+i}).
\]
We note that $\deg_z(R_i)=30$ 
as a rational function in $z$. 

\subsection{A proof of the main result}
Suppose that, for some
$i$ ($0\le i\le 5$), only finitely many coefficients $\overline a_{6m+i}$ are nonzero.
By Lemme \ref{cyclic}, there exists $G_i(Y)\in\F_{25}[Y]$ such that
\[
 R_i(z(Q))=Q^{i} G_i(Q^6).
\]
Taking sixth powers and using (\ref{Tz}) $Q^6=P(z)$ over $\F$ gives 
\begin{equation}\label{rpf}
 R_i(z)^6=P(z)^{i} G_i(P(z))^6.
\end{equation}

We explain that no cancellation occurs in the degree computation on the right 
hand side of (\ref{rpf}). 
It is easy to see that $ \deg_z P=120$ where $\deg_z$ means the degree as a rational 
function in $z$. 
Put $F_i(Y)=Y^i G_i(Y)^6$. Then
\[
  \deg F_i=i+6\deg G_i.
\]
Since degrees of nonconstant rational maps in $z$ multiply under composition,
we have
\[
  \deg_z F_i(P(z))
  =
  \deg F_i\cdot\deg_z P
  =
  120\bigl(i+6\deg G_i\bigr).
\]

Comparing degrees of both sides of (\ref{rpf}) , we obtain
\[
  180
  =
  120\bigl(i+6\deg G_i\bigr),
\]
or equivalently,
\[
  3=2i+12\deg G_i.
\]
But this is impossible.

We conclude that there is no polynomial
\(G_i(Y)\in\F_{25}[Y]\) such that
\[
  R_i(z(Q))=Q^i G_i(Q^6).
\]

Therefore, 
\begin{equation}\label{non-poly}
 \sum_{m\geq0}\overline a_{6m+i}Q^{6m+i}
 \notin\F_{25}[Q]
 \quad (i=0,\ldots,5).
\end{equation}
Thus, for every $i$, there are infinitely many $m$ such that the normalized
coefficient in degree $6m+i$ is a $5$-adic unit.  
Summing up, we have proved the following result: 
\begin{thm}\label{premain} Keep the notation as above. 
For each $0\le i\le 5$, the expansion of $\eta_i$ at $q_{12}$ has (5-adic)
unbounded denominators. In particular, it is true for $\omega_2=\eta_0$. 
\end{thm}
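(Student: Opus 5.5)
The plan is to convert the mod-$5$ non-polynomiality statement (\ref{non-poly}) into a statement about $5$-adic valuations of the $q_{12}$-coefficients, exploiting that $\kappa$ is $5$-adically very small in the denominator sense. First I check integrality. By Proposition \ref{FCalg} and the definition of $J_2$, all $a_n$ lie in $\O_K[\frac12]$. Under $\iota_5$ these are $5$-adic integers, since $5\nmid 2$ and $\O_K\subset\bZ_5$ after the embedding. The projectors defining $\eta_i$ involve only $\zeta_6$ and the factor $\frac16$, both $5$-adic units. Hence $\eta_i=\bigl(\sum_{m}a_{6m+i}Q^{6m+i}\bigr)\frac{dQ}{Q}$ has $5$-integral coefficients in the variable $Q=\kappa q_{12}$, and reduction mod $5$ makes sense and is computed by $R_i(z(Q))$ as in the preceding subsection.

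Second, I use (\ref{non-poly}), i.e.\ the degree comparison $3=2i+12\deg G_i$ being impossible. This requires that $R_i\not\equiv 0$, so that the degree $\deg_z R_i^6=180$ is meaningful. I would verify this directly from the explicit $N_i(z)$, none of which vanishes identically. It also requires the case where only finitely many $\overline a_{6m+i}$ are nonzero, which forces a polynomial $G_i$; the possibility that all vanish is covered by $R_i\neq 0$. Therefore, for every $i$, the set $\mathcal M_i:=\{m : v_5(\iota_5(a_{6m+i}))=0\}$ is infinite.

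Third, I rewrite the expansion in $q_{12}$: the coefficient of $q_{12}^{6m+i}\,\frac{dq_{12}}{q_{12}}$ in $\eta_i$ is $a_{6m+i}\kappa^{6m+i}$. Since $\kappa^6=-2^6 5^{-5}$, we have $v_5(\kappa)=-\frac56$. So for $m\in\mathcal M_i$,
\[
v_5\bigl(\iota_5(a_{6m+i}\kappa^{6m+i})\bigr)=-\tfrac56(6m+i)\longrightarrow-\infty .
\]
This gives unbounded $5$-adic denominators for every $\eta_i$. Since unboundedness is insensitive to the harmless normalization $g=\frac{6}{\pi i}\omega/d\tau$ (a shift of index by one), it holds for the associated cusp forms as well. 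The last assertion then follows from the identification of $\omega_2$ with the relevant component $\eta_0$ as stated; more robustly, unbounded denominators of some $\eta_i$ forces them for $\omega_2=\sum_i\eta_i$, because the $\eta_i$ occupy disjoint residue classes of exponents mod $6$.

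The main obstacle is the second step, namely rigorously justifying that the mod-$5$ reduction of the power series in $Q$ equals $R_i(z(Q))$ with $z(Q)\in\F_{25}[[Q]]$ a genuine local branch. This needs $\xi$ to be a simple root of $p$, so that $P'(\xi)$-type logarithmic derivatives are legitimate. It also needs compatibility of the degeneration: the characteristic-zero branch at $P_\tau$ (parameter $d$) must reduce to the branch of the rational parametrization (\ref{xypara}) through a point over a root of $p$. I would first check that $d\mapsto z$ is a unit change of parameter mod $5$ using (\ref{Tz}) and the separability of $p(z)$. If that fails, I would pass to the $6$th-power relation directly, as the paper does.
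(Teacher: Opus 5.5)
Your proposal is correct and follows essentially the same route as the paper: $5$-integrality of the $a_n\in\O_K[\frac12]$, then the mod-$5$ non-polynomiality (\ref{non-poly}) giving infinitely many $m$ with $a_{6m+i}$ a $5$-adic unit, then $\ord_5(\kappa)=-\frac56$ forcing $\ord_5(a_{6m+i}\kappa^{6m+i})\to-\infty$. Your reading of the final clause via $\omega_2=\sum_{i}\eta_i$, where the $\eta_i$ occupy disjoint exponent classes mod $6$, rather than the paper's literal ``$\omega_2=\eta_0$'', is a correct and worthwhile repair.
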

\begin{proof} Write $\eta_i=
\ds\sum_{m\ge 0}a_{6m+i}\kappa^{6m+i}q_{12}^{6m+i}\frac{dq_{12}}{q_{12}}\in 
\O_K\Big[\frac{1}{2}\Big][[q_{12}]]\frac{dq_{12}}{q_{12}}$. 
By (\ref{non-poly}), there are infinitely many $m$ such that $a_{6m+i}$ is 
5-adically unit. Thus, $\ds\inf_m\{\ord_5(a_{6m+i}\kappa^{6m+i})\}=-\infty$. 
Here $\ord_5$ is normalized to be $\ord_5(5)=1$ so that $\ord_5(\kappa)=-\frac{5}{6}$. 
\end{proof}
We are now ready to prove the second main result:
\begin{thm}\label{diffubc}
 Every nonzero differential $\omega\in V_{\overline{\Q}}$ 
 has unbounded $5$-adic denominators in its Fourier expansion in $q_{12}$, with respect to the fixed embedding $\iota_5$.
\end{thm}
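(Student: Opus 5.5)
The plan is to reduce an arbitrary $\omega\in V_{\bQ}$ to the $\sigma_0^*$-eigencomponents $\eta_0,\dots,\eta_5$ and then apply Theorem \ref{premain} to one of them.

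\textbf{Step 1: eigenbasis.} By Lemma \ref{cyclic}, $\omega_2$ is a cyclic vector for $\sigma_0^*$ on $V_{\bQ}$. The characteristic polynomial of $M_{\sigma_0}$ is $Y^6-1$, which has six distinct roots $\zeta_6^i$. Hence each eigenspace is one-dimensional. The projection of $\omega_2$ to the $\zeta_6^i$-eigenspace is $\eta_i$, and it is nonzero because $\omega_2$ is cyclic. So $\eta_0,\dots,\eta_5$ is a basis of $V_{\bQ}$. Any nonzero $\omega$ can therefore be written as $\omega=\sum_i c_i\eta_i$ with $c_i\in\bQ$ and some $c_{i_0}\neq0$.

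\textbf{Step 2: reduction to one eigencomponent.} In the $Q=\kappa q_{12}$-expansion, $\eta_i$ is supported on exponents $\equiv i \pmod 6$. These residue classes are disjoint. Consequently, the coefficient of $q_{12}^{6m+i_0}$ in $\omega$ (written against $dq_{12}/q_{12}$) is exactly $c_{i_0}a_{6m+i_0}\kappa^{6m+i_0}$.

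\textbf{Step 3: valuation estimate.} Apply $\ord_5$ through $\iota_5$. We have
\[
\ord_5\bigl(c_{i_0}a_{6m+i_0}\kappa^{6m+i_0}\bigr)=\ord_5(c_{i_0})+\ord_5(a_{6m+i_0})-\tfrac{5}{6}(6m+i_0).
\]
By the argument behind Theorem \ref{premain}, which is (\ref{non-poly}), there are infinitely many $m$ for which $a_{6m+i_0}$ is a $5$-adic unit. Along those $m$ the valuation tends to $-\infty$, while $\ord_5(c_{i_0})$ is a fixed finite number. Hence $\omega$ has unbounded $5$-adic denominators.

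\textbf{Main obstacle.} The only delicate point is Step 1. We must make sure that the eigenprojections $\eta_i$ of $\omega_2$ are all nonzero; otherwise an arbitrary $\omega$ need not be a combination of them. This follows from the cyclicity in Lemma \ref{cyclic} together with the separability of $Y^6-1$ in characteristic $0$.

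There is a second point to respect. The statement is about $5$-adic valuations of the coefficients in $\bQ$, embedded via $\iota_5$; it is not a statement about reductions mod $5$ of the $c_i$. The decomposition in Step 1 is carried out over $\bQ$, so no reduction of the $c_i$ is ever needed, and the argument goes through as stated.
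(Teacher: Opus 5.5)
Your proof is correct and is essentially the paper's argument. Your coefficients $c_i$ are exactly the values $G(\zeta_6^i)$ in the paper's expression $\omega=G(\sigma_0^*)\omega_2=\sum_i G(\zeta_6^i)\eta_i$, and both proofs isolate one residue class of exponents modulo $6$ and then invoke Theorem \ref{premain}. The only difference is that the paper first rescales $G$ and reduces modulo $\mathfrak p$ to find an $i$ with $G(\zeta_6^i)$ a $\mathfrak p$-adic unit, whereas you note that any nonzero $c_{i_0}$ only shifts valuations by the fixed finite amount $\ord_5(c_{i_0})$, which is a slightly more direct route to the same conclusion.
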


\begin{proof}
Since $\omega_2$ is a cyclic vector for $\sigma^*$, every nonzero
$\omega\in V_{\overline{\Q}}$ can be written uniquely as
\[
 \omega=G(\sigma^*_0)\omega_2,
 \quad
 G(Y)\in\overline{\Q}[Y],
 \quad
 \deg_Y G<6.
\]
Choose a number field $L$ containing the coefficients of $G$, $\zeta_6$,
and $\kappa$, and let $\mathfrak p$ be the place of $L$ induced by
$\iota_5$.  Let $\O_{L,\mathfrak p}$ be the localization of $\O_L$ at $\frak p$. 

Multiplication of $G$ by a nonzero scalar changes all valuations
by only a fixed constant and hence does not affect unboundedness.  We may
therefore normalize $G$ so that
$
 G(Y)\in\mathcal O_{L,\mathfrak p}[Y]
$
and its reduction
$
 \overline G(Y)\in\overline{\F}_5[Y]
$
is nonzero.  We still have $\deg_Y\overline G<6$.

The polynomial $Y^6-1$ has six distinct roots in $\overline{\F}_5$, since
$5\nmid6$.  Hence $\overline G$ cannot vanish at all of them and there exists
$i\in\{0,\ldots,5\}$ such that
\[
 \overline G(\zeta_6^i)\neq0.
\]
Thus $G(\zeta_6^i)$ is a $\mathfrak p$-adic unit.  
We write $\omega=G(\sigma^*_0)\omega_2=\ds\sum_{n\ge 1}b_n Q^n\frac{dQ}{Q} \in 
\O_{L,\frak p}[[q_{12}]]\frac{dQ}{Q}$ with $Q=\kappa q_{12}$. 
Then, $b_{6m+i}=G(\zeta_6^i) a_{6m+i}$ for any $m\ge 1$ and the above chosen $i$. 
By Theorem \ref{premain}, 
\[
 \inf_m\ord_{\mathfrak p}(a_{6m+i}\kappa^{6m+i})=-\infty,
\]
and thus, $\ds\inf_m\ord_{\mathfrak p}(b_{6m+i} \kappa^{6m+i})=-\infty$. 
Therefore, the Fourier expansion of $\omega$ in $q_{12}$ has unbounded $5$-adic denominators.
\end{proof}

\begin{proof}(A proof of Theorem \ref{udc}.) 
Let $S_2(\G_{\wW},\bQ)$ denote the space of cusp forms whose Fourier coefficients in their $q_{12}$-expansions all lie in $\bQ$. We have a $\bQ$-linear isomorphism 
$S_2(\G_{\wW},\bQ)\simeq V_{\bQ}$ by sending $f$ to 
$\frac{\pi i}{6}f(\tau)d\tau=q^{-1}_{12}f(q_{12})dq_{12}$. 
The claim now follows from Theorem \ref{diffubc}. 
\end{proof}

\section*{Declaration of generative AI assistance}

The research questions, conceptual framework, and principal mathematical ideas presented in this paper were developed by the authors. During the preparation of the manuscript, they used ChatGPT, developed by OpenAI, to improve the language and presentation and to assist with explicit algebraic computations. 
There is no further use of AI in this paper. 
All AI-assisted material was independently examined and verified by the authors, who take full responsibility for the correctness and content of the manuscript.

\end{document}